\newtheorem{theorem}{Theorem}[section]
\newtheorem{proposition}[theorem]{Proposition}
\newtheorem{lemma}[theorem]{Lemma}
\newtheorem{corollary}[theorem]{Corollary}
\theoremstyle{definition}
\newtheorem{definition}[theorem]{Definition}
\newtheorem{example}[theorem]{Example}
\newtheorem{remark}[theorem]{Remark}
\numberwithin{equation}{section}
\DeclareMathOperator{\syl}{syl}
\DeclareMathOperator{\len}{len}
\DeclareMathOperator{\tr}{tr}
\DeclareMathOperator{\re}{Re}
\DeclareMathOperator{\im}{Im}
\DeclareMathOperator{\SL}{SL}
\DeclareMathOperator{\PSL}{PSL}
\newcommand{\Q}{\mathbb{Q}}
\newcommand{\R}{\mathbb{R}}
\newcommand{\C}{\mathbb{C}}
\newcommand{\Z}{\mathbb{Z}}
\newcommand{\N}{\mathbb{N}}
\newcommand{\HH}{\mathbb{H}}
\newcommand\restr[2]{{
  \left.\kern-\nulldelimiterspace
  #1
  \right|_{#2}}}
\begin{document}

\title{Long relators in groups generated by two parabolic elements}
\author{Rotem Yaari}
\address{Rotem Yaari, Department of Pure Mathematics, Tel Aviv University, Israel}
\email{rotemnaory@mail.tau.ac.il}

\begin{abstract}
We find a family of groups generated by a pair of parabolic elements in which every relator must admit a long subword of a specific form.
In particular, this collection contains groups in which the number of syllables of any relator is arbitrarily large.
This suggests that the existing methods for finding non-free groups with rational parabolic generators may be inadequate in this case, as they depend on the presence of relators with few syllables.
Our results rely on two variants of the ping-pong lemma that we develop, applicable to groups that are possibly non-free. These variants aim to isolate the group elements responsible for the failure of the classical ping-pong lemma.
\end{abstract}



\maketitle

\section{Introduction}
\subsection{Background and main results}
Determining which pairs of parabolic elements of $\SL_2(\C)$ (namely, elements with trace $\pm2$) generate a free subgroup is a long-standing problem. For $\lambda,\mu\in\C$, denote throughout this paper
\begin{equation}\label{matrix-notation}
    A_{\lambda} = \begin{pmatrix}
    1 &  \lambda \\
    0 & 1
    \end{pmatrix},\quad B_{\mu} = \begin{pmatrix}
    1 & 0 \\
    \mu & 1
    \end{pmatrix}\qquad \text{and}\qquad G_{\lambda,\mu} = \langle A_{\lambda},B_{\mu}\rangle,\quad G_{\lambda} = \langle A_{\lambda},B_{\lambda}\rangle = G_{\lambda,\lambda}.
\end{equation}
Any two non-commuting parabolic matrices are simultaneously conjugate to $\pm A_\lambda$ and $\pm B_\lambda$ for some $\lambda\ne 0$ (see for example, \cite[Appendix III]{harpe-coxeter}), and hence the problem reduces to the study of $G_\lambda$ (and we also note that $G_\lambda$ is free if and only if its image in $\PSL_2(\C)$ is free).
However, it will still be convenient later to replace the group $G_\lambda$ with a conjugate group, using the fact that when $\lambda_1\mu_1=\lambda_2\mu_2$, $G_{\lambda_1,\mu_1}$ and $G_{\lambda_2,\mu_2}$ are conjugate \cite[Appendix III]{harpe-coxeter}. 
Let us call $\lambda\in\C$ \emph{free} if $G_\lambda$ is free, and \emph{non-free} otherwise.
Every transcendental number is free \cite{fuchs}, and Sanov famously proved that $2$ is free, while also characterizing the elements of $G_2$ \cite{sanov}.
Sanov's result was later generalized by Brenner, who showed that every $\lambda \ge 2$ is free \cite{brenner55}.
In \cite{chang-ree, lyndon, ignatov}, the well known ping-pong lemma was used to find large domains of free complex numbers.

Non-free numbers are abundant as well: they are dense in the unit ball in $\C$ \cite{lyndon} and real non-free numbers are dense in the interval $(-2,2)$ \cite{chang-ree, beardon}.
Among non-free numbers, the rational ones are of particular interest: throughout the years, many non-free rationals contained in $\Q\cap(-2,2)$ have been found, whereas we are not aware of a single free number in this subset. The question of whether all $\lambda \in \Q\cap(-2,2)$ are non-free was raised by Lyndon and Ullman \cite{lyndon} (as well as by Merzlyakov \cite[Problem 15.83]{unsolved_problems}), while it is explicitly stated as a conjecture in \cite{kim-koberda}.
A key observation made by Lyndon and Ullman, in an effort to find such rationals, is the following:
consider the word $w= A_\lambda^{n_1} B_\lambda^{n_2}\cdots A_\lambda^{n_k}$ where $n_1,\dots, n_k \ne 0$. We call each $A_\lambda^{n_{2i+1}}$ and $B_\lambda^{n_{2i}}$ a \emph{syllable} of $w$ (see Definition \ref{definitions words}). Denote the upper right entry of the matrix $w$ by $p_w(\lambda)$. Thinking of $\lambda$ as a variable,
$p_w(\lambda)$ turns out to be a polynomial, whose coefficients depend on the integers $n_1,\dots,n_k$ and whose degree is precisely the number of syllables, $k$.
Moreover, finding some word $w$ as above such that $p_w(\lambda)=0$ is equivalent to showing that $\lambda$ is non-free (in the sense defined earlier).
This allows to find many non-free rationals by analyzing a corresponding infinite family of polynomials of low degree. Using only $k=3$, Lyndon and Ullman proved that for every positive integer $n$, the number $\frac{n}{n^2+1}$ is non-free \cite{lyndon}, and Beardon managed to improve this by showing that $\frac{n}{jn^2\pm1}$ is non-free for all positive integers $n$ and $j$ (with $nj\ge 2)$ \cite{beardon}. Variants of this method were subsequently developed by numerous authors, including \cite{tantan, bamberg, kim-koberda, new-seq} among others, to obtain more extensive families of non-free rationals.
While this has become the most prominent method, several other approaches have also emerged, sometimes being employed in conjunction with the former.
Linear recurrence sequences were used in \cite{lyndon, brenner-olesky}. More recently, non-free rationals $\frac{p}{q}$ were found in \cite{detinko2022freeness} by showing that $G_{\frac{p}{q}}$ has finite index in $G_{\frac{1}{q}}$,
and the structure of the group $G_{\frac{p}{q}, 1}$ for certain rationals was studied in the preprint \cite{nybergbrodda2024}, yielding that the index in $G_{\frac{1}{q}}$ is finite as well. In the preprint \cite{choi2024}, non-free numbers were characterized as roots of some family of polynomials.

All of these techniques seem to share a common drawback: they are effective only when relators with few syllables are present in $G_{\frac{p}{q}}$; this should not be surprising, considering that as mentioned earlier, such relators correspond to low degree polynomials.
For this reason, Beardon \cite[p.\ 532]{beardon} and the authors of \cite[p.\ 53]{detinko2022freeness}
have doubted that these techniques could be used to settle the conjecture for all $\Q\cap(-2,2)$, 
and it was indeed shown in \cite{gutan2014, choi2024} that for real numbers $\lambda$, relators in $G_\lambda$ consist of more syllables as $\lambda \to 2$.
In this paper we strengthen this result and provide further insights on the structure of relators in these groups.
Before presenting our results, let us introduce the following definitions and notation.
\begin{definition}\label{definitions words}
    Let $S$ be a finite set and $w$ a word composed of letters from $S$. Let $w = s_{1}^{n_1}\cdots s_{k}^{n_k}$ be its reduced form, i.e., $s_i\in S$, $s_i \ne s_{i+1}^{\pm 1}$ and $n_i \in \Z \setminus \{0\}$ for all $i$.
    The \emph{syllables} of $w$ are the subwords $s_1^{n_1},\dots, s_{k}^{n_k}$.
    We denote by $\syl_S(w)$ the number of syllables in $w$ (namely, $k$).
    Now let $G$ be a group generated by $S\subseteq G\setminus\{1\}$, then we define
    \begin{equation*}
        \sigma_S(G) = \min\{\syl_S(w) : w\text{ is a nonempty word composed of letters from }S \text{ and } w=1\text{ in } G\}.
    \end{equation*}
    In other words, $\sigma_S(G)$ is the minimal number of syllables in a relator; throughout this paper, every mention of a \emph{relator} refers to a \emph{nontrivial relator}, namely, a nonempty word in the free group. If $S$ freely generates $G$, we define $\sigma_S(G) = \infty$. When $S$ is obvious from the context, we omit it and simply write $\syl(w)$ and $\sigma(G)$.
\end{definition}
The following is our first main result. As mentioned, it shows that the number of syllables in relators of $G_{\lambda, 2}$ increases as $\lambda\to 2$ in $\C$, but moreover, that every relator must admit a long subword of a specific form. It also includes a similar result for the groups $G_{\lambda,i}$ with $\lambda\to 2$.
\begin{theorem}\label{complex theorem}
    Let $\mu \in \{2, i\}$. Then for every $N\in \N$ there exists a neighborhood $U\subseteq \C$ of $2$ such that for every $\lambda\in U$, every relator of $G_{\lambda,\mu}$ has a subword $w$ of the form:
    \begin{enumerate}
        \item if $\mu = 2$: $w = (A_\lambda B_2^{-1})^k$ or $w=(A_\lambda^{-1} B_2)^k$ where $k\ge N$,
        \item if $\mu = i$: $w=A_\lambda^{n_1}B_i^{n_2}\cdots A_\lambda^{n_{k-1}} B_i^{n_k}$ where $k\ge N$ and $n_j\in\{\pm1\}$.
    \end{enumerate}
    In particular, $\sigma(G_{\lambda,\mu})\to \infty$ as $\lambda\to 2$.
    Furthermore, a lower bound on the length of $w$ can be computed effectively. More precisely, let $\lambda_n\to 2$, then there are effectively computable constants $k_n\in \N$ such that $k_n \to \infty$, and any relator of $G_{\lambda_n, \mu}$ has a subword $w_n$ of the above form with $\syl(w_n)\ge k_n$.
\end{theorem}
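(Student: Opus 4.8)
The plan is to analyse the action of $G_{\lambda,\mu}$ on the projective line $\mathbb{P}^{1}(\C)$ (equivalently, on lines in $\C^{2}$) and to apply one of the two variant ping-pong lemmas developed above. At the basepoint $\lambda=\mu=2$ the group $G_{2,2}$ is the Sanov group, which carries the classical ping-pong partition $X_{A}=\{[x:y]:|x|\ge|y|\}$, $X_{B}=\{[x:y]:|x|\le|y|\}$: since $|x+n\lambda y|\ge(|n||\lambda|-1)|y|$, one gets $A_{2}^{n}X_{B}\subseteq X_{A}$ and $B_{2}^{n}X_{A}\subseteq X_{B}$ with room to spare for $|n|\ge 2$, while for $|n|=1$ these inclusions only barely fail (the boundary $|x|=|y|$ can map onto itself). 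For $\mu=i$ one first uses the conjugacy $G_{2,i}\cong G_{\lambda'}$ with $(\lambda')^{2}=2i$, i.e.\ $\lambda'=1+i$, so that the same partition applies with the same list of good and bad powers. Perturbing $\lambda$ near $2$ only shrinks these sets slightly, so the entire obstruction to freeness is carried by finite products of the bad syllables $A_{\lambda}^{\pm1}$, $B_{\mu}^{\pm1}$.

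I would then classify which bad products can genuinely break ping-pong, using only trace computations. For $\mu=2$, $\tr(A_{\lambda}B_{2}^{-1})=\tr(A_{\lambda}^{-1}B_{2})=2-2\lambda\to -2$, so these two pairs are near-parabolic at $\lambda=2$, whereas $\tr(A_{\lambda}B_{2})=\tr(A_{\lambda}^{-1}B_{2}^{-1})=2+2\lambda\to 6$, and a product of the two near-parabolic patterns has trace tending to $18$; all of these blocks are strongly loxodromic and can be adjoined to the ping-pong family (a loxodromic element with large multiplier maps a fattened $X_{B}$ inside $X_{A}$, etc.). Hence any bad word containing a same-sign pair, or mixing the two near-parabolic patterns, is resolved, and the only bad words that can still fail ping-pong are, up to bounded-length ends, powers $(A_{\lambda}B_{2}^{-1})^{k}$ or $(A_{\lambda}^{-1}B_{2})^{k}$. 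For $\mu=i$ the relevant two-syllable blocks have $\tr(A_{\lambda}B_{i}^{\pm1})=2\pm i\lambda\to 2\pm2i$; here the promotion step is more delicate, the surviving family of bad words is the larger set of $A_{\lambda}^{n_{1}}B_{i}^{n_{2}}\cdots$ with all $n_{j}=\pm1$, and the near-identity obstruction inside it is supplied by the near-parabolic commutator $A_{\lambda}B_{i}A_{\lambda}^{-1}B_{i}^{-1}$ (one checks $\tr\to -2$).

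The quantitative core is to show that a bounded power of such a near-parabolic element $M_{\lambda}$ still satisfies the ping-pong inclusions, with the bound tending to infinity as $\lambda\to 2$. Writing $\tr M_{\lambda}=-2+\delta(\lambda)$ with $\delta(\lambda)\to 0$ and $M_{\lambda}$ of bounded norm, the two fixed points of $M_{\lambda}$ on $\mathbb{P}^{1}$ lie at distance of order $|\delta(\lambda)|^{1/2}$ and collide at the corner $|x|=|y|$, and the multiplier is $1+O(|\delta(\lambda)|^{1/2})$; hence $M_{\lambda}^{k}$ stays uniformly close to the parabolic limit $M_{2}^{k}$ for all $k$ up to order $|\delta(\lambda)|^{-1/2}$. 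Working in the conjugate coordinate that straightens $M_{2}$ to a translation, one tracks a ``defect'' functional measuring failure of the ping-pong inclusion: good syllables reset it, each bad syllable changes it by $O(1)$, and $(A_{\lambda}B_{2}^{-1})^{k}$ changes it by a controlled amount that becomes dangerous only once $k$ exceeds some $k(\lambda)=c\,|\lambda-2|^{-1/2}$. Feeding this into the variant ping-pong lemma shows every relator contains a subword $(A_{\lambda}B_{2}^{-1})^{k}$ or $(A_{\lambda}^{-1}B_{2})^{k}$ (resp., for $\mu=i$, an $A_{\lambda}^{n_{1}}B_{i}^{n_{2}}\cdots$ with $n_{j}=\pm1$) of length $k\ge k(\lambda)$. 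Taking $U=\{\lambda:k(\lambda)\ge N\}$ gives the theorem, and since $c$ and the implied constants are explicit, $k_{n}:=\lfloor c\,|\lambda_{n}-2|^{-1/2}\rfloor$ furnishes the claimed effective bounds.

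The main obstacle is making the variant ping-pong lemma mesh cleanly with the near-parabolic dynamics: one needs a system of nested sets — a core pair on which good syllables strictly contract and a buffer pair that bad syllables respect and from which a single good syllable recovers the core — whose constants are uniform for $\lambda$ near $2$ and are simultaneously compatible with the near-parabolic powers $M_{\lambda}^{k}$ remaining tame for $k$ up to $k(\lambda)$. Establishing that mixed and same-sign bad blocks really are absorbed (so that only the pure patterns, resp.\ the $\pm1$-words, survive) and that the defect bookkeeping closes up into a relator-level statement is where the effort concentrates; the trace identities and the perturbation estimate $M_{\lambda}^{k}\approx M_{2}^{k}$ are by comparison routine.
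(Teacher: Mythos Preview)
Your overall shape for $\mu=2$ is close to the paper's: both single out the syllables $a^{\pm1},b^{\pm1}$ as the only ones that can spoil ping-pong, and both reduce further to the pure alternating patterns $(ab^{-1})^k$, $(a^{-1}b)^k$. But the paper does this by direct set-inclusion checks rather than trace heuristics: with $X_1=\{|z|<1\}$, $X_2=\{|z|>1\}$, $K_1=\{|z|\le 1/2\}$, $K_2=\{|z|\ge 5/4\}$ one verifies by hand that $a_\lambda^n X_1\subset K_2$ and $b^n X_2\subset K_1$ for $|n|\ge 2$, and that $a_\lambda b(X_2)\subset K_2$, $ba_\lambda(X_1)\subset K_1$, uniformly for $\re\lambda>1.5$. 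This feeds straight into Corollary~\ref{cor long ab}, whose proof is a pure compactness/continuity argument: since the ping-pong inclusions hold at $\lambda_0=2$, every \emph{finite} collection of words in the bad class $\mathcal B$ remains in $W_{K\to X}^\lambda$ on a neighborhood of $2$. No near-parabolic perturbation analysis or defect functional is needed for the complex statement; you are importing the machinery of Theorem~\ref{real theorem} (the explicit $|2-\lambda|^{-1/2}$ rate) into a place where a soft argument suffices. Correspondingly, the paper's ``effective'' claim is algorithmic---check images of circles under increasingly long words---not the closed-form $k_n=\lfloor c|\lambda_n-2|^{-1/2}\rfloor$ you assert, which is only established for real $\lambda$.

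There is a genuine gap in your $\mu=i$ case. You propose to conjugate $G_{2,i}$ to $G_{\lambda'}$ with $\lambda'=1+i$ and then reuse the Sanov partition $\{|x|\ge|y|\}$ versus $\{|x|\le|y|\}$. But $|1+i|=\sqrt{2}<2$, and for any $|\lambda|<2$ the map $A_\lambda$ does \emph{not} send $\{|z|\le 1\}$ into $\{|z|\ge 1\}$: take $z=-\lambda/2$, so $|z|<1$ yet $|A_\lambda z|=|\lambda|/2<1$. Thus the very first ping-pong inclusion fails at the basepoint, and no perturbation argument around it can get started. The paper handles $\mu=i$ with a genuinely different, non-circular partition: $X_1$ is a union of three unit discs centered at $0,\pm i$, and $X_2$ is a half-plane-plus-exterior-of-disc region, chosen so that $b=B_i$ (after conjugating by $J=\begin{pmatrix}0&-1\\1&0\end{pmatrix}$) acts by vertical translation on a set visibly mapped as required. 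With this partition one again checks $\{\overline a^n,\overline b^n:|n|\ge 2\}\subset W_{X\to K}^\lambda$ and invokes Corollary~\ref{cor long ab}; note the additional inclusion $\{(\overline a\overline b)^{\pm1},(\overline b\overline a)^{\pm1}\}\subset W_{X\to K}^\lambda$ is \emph{not} verified here, which is exactly why the conclusion for $\mu=i$ is the weaker one (all $n_j\in\{\pm1\}$) rather than the rigid $(ab^{-1})^k$ pattern.
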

This result does not depend on the specific chosen normalization of the groups $G_{\lambda,\mu}$, as the following corollary indicates. 
\begin{corollary}\label{corollary convergence}
    Let $(\lambda_n)_{n=1}^\infty,(\mu_n)_{n=1}^\infty \subseteq \C$ and suppose that $\lambda_n\cdot \mu_n\to \ell$ where $\ell\in\{\pm 4, \pm 2i\}$. Then for every $K\in \N$ there exists $N\in \N$ such that for every $n\ge N$, every relator of $G_{\lambda_n,\mu_n}$ has a subword $w$ of the form:
    \begin{enumerate}
        \item if $\ell = \pm 4$: $w = (A_{\lambda_n}\, B_{\mu_n}^{-1})^k$ or $w=(A_{\lambda_n}^{-1}\, B_{\mu_n})^k$ where $k\ge K$,
        \item if $\ell = \pm 2i$: $w=A_{\lambda_n}^{n_1}\,B_{\mu_n}^{n_2}\cdots A_{\lambda_n}^{n_{k-1}}\, B_{\mu_n}^{n_k}$ where $k\ge K$ and $n_j\in\{\pm1\}$.
    \end{enumerate}
    In particular, $\sigma(G_{\lambda_n,\mu_n})\underset{n\to\infty}{\longrightarrow}\infty$.
\end{corollary}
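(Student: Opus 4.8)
The plan is to reduce all four cases $\ell\in\{\pm4,\pm2i\}$ to Theorem \ref{complex theorem} by putting the groups into a normalized form and following relators through the reduction. I would work in the free group $F=F(x,y)$ and view $G_{\lambda,\mu}$ as presented by $x\mapsto A_\lambda$, $y\mapsto B_\mu$; then only two relator‑preserving operations on the parameter pair are needed. First: if $\lambda_1\mu_1=\lambda_2\mu_2$ with all four entries nonzero, conjugation by $\operatorname{diag}(t,t^{-1})$ with $t^2=\mu_1/\mu_2$ carries $A_{\lambda_1}\mapsto A_{\lambda_2}$ and $B_{\mu_1}\mapsto B_{\mu_2}$, so a word $w\in F$ is a relator of $G_{\lambda_1,\mu_1}$ if and only if it is a relator of $G_{\lambda_2,\mu_2}$, and the syllables and subwords of $w$ are literally unchanged. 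Second: since $B_{-\mu}=B_\mu^{-1}$, the groups $G_{\lambda,\mu}$ and $G_{\lambda,-\mu}$ coincide inside $\SL_2(\C)$ but have distinct defining generating sets, and concretely $w$ is a relator of $G_{\lambda,\mu}$ if and only if $w(x,y^{-1})$ is a relator of $G_{\lambda,-\mu}$. The first operation preserves the product $\lambda\mu$, while the second negates it and relabels $y\mapsto y^{-1}$ inside every subword; both preserve syllable counts.

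Given $(\lambda_n)$, $(\mu_n)$ with $\lambda_n\mu_n\to\ell$, I would discard the finitely many $n$ with $\lambda_n\mu_n=0$ and set $\mu_\ast=2$ if $\ell\in\{\pm4\}$, $\mu_\ast=i$ if $\ell\in\{\pm2i\}$, and $\varepsilon=+1$ if $\ell\in\{4,2i\}$, $\varepsilon=-1$ otherwise. If $\varepsilon=-1$, apply the second operation once to pass from $(\lambda_n,\mu_n)$ to $(\lambda_n,-\mu_n)$, whose product tends to $-\ell\in\{4,2i\}$; then by the first operation conjugate onto $G_{\beta_n,\mu_\ast}$ with $\beta_n=\varepsilon\lambda_n\mu_n/\mu_\ast$. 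Since $\beta_n\to\varepsilon\ell/\mu_\ast=2$, Theorem \ref{complex theorem} applies to $G_{\beta_n,\mu_\ast}$: in its effective sequential form it supplies effectively computable $k_n\to\infty$ such that every relator of $G_{\beta_n,\mu_\ast}$ contains a subword of the asserted shape with at least $k_n$ syllables. Transporting this conclusion backwards — the first operation leaves subwords untouched, the second relabels $y\mapsto y^{-1}$ — gives that every relator of $G_{\lambda_n,\mu_n}$ has a subword of the corresponding shape with $\ge k_n$ syllables. When $\ell\in\{\pm2i\}$ the shape in Theorem \ref{complex theorem}(2) is constrained only by $n_j\in\{\pm1\}$, which is stable under $y\mapsto y^{-1}$, so one recovers exactly a subword as in Corollary \ref{corollary convergence}(2); when $\ell\in\{\pm4\}$ one recovers a power of the almost‑parabolic element $A_{\lambda_n}B_{\mu_n}^{-\varepsilon}$ or of $A_{\lambda_n}^{-1}B_{\mu_n}^{\varepsilon}$ (note $\tr(A_{\lambda_n}B_{\mu_n}^{-\varepsilon})=2-\varepsilon\lambda_n\mu_n\to-2$), which is the shape recorded in Corollary \ref{corollary convergence}(1) — for $\ell=-4$ with $B_{\mu_n}$ in place of $B_{\mu_n}^{-1}$, as dictated by that parabolic element. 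Finally, a subword of a reduced word has at most as many syllables as the whole word, so every relator of $G_{\lambda_n,\mu_n}$ has $\ge k_n$ syllables, whence $\sigma(G_{\lambda_n,\mu_n})\ge k_n\to\infty$; and given $K$ one takes any $N$ with $k_n\ge K$ for all $n\ge N$.

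The step needing the most care is the bookkeeping of shapes: one should check that the relabelling $y\mapsto y^{-1}$ sends reduced words to reduced words — it does, being merely the transposition of the letters $y$ and $y^{-1}$ — so that it sends subwords to subwords, and then match Theorem \ref{complex theorem}'s explicit forms after the relabelling. Beyond this I expect no genuine obstacle: no analytic input other than Theorem \ref{complex theorem} is used, and the hypothesis $\ell\in\{\pm4,\pm2i\}$ enters precisely so that, after at most one application of the second operation, the first operation normalizes the pair to $(\beta,2)$ or $(\beta,i)$ with $\beta\to2$, which is exactly the situation handled by that theorem.
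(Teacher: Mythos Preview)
Your approach is essentially identical to the paper's: reduce to $\ell\in\{4,2i\}$ via $B_{-\mu}=B_\mu^{-1}$, conjugate by a diagonal matrix $\operatorname{diag}(z_n,z_n^{-1})$ to reach $G_{\lambda_n',\,\ell/2}$ with $\lambda_n'\to 2$, and invoke Theorem~\ref{complex theorem}. Your bookkeeping of subword shapes under the relabelling $y\mapsto y^{-1}$ is in fact more careful than the paper's own terse argument, which glosses over the very point you flag for $\ell=-4$.
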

By Corollary \ref{corollary convergence}, if a complex sequence $(\mu_n)$ converges to $\mu\in\{\pm 4,\pm 2i\}$, then $\sigma(G_{1,\mu_n})\to\infty$.
The thing that makes those values of $\mu$ interesting, is that each of them is an accumulation point of $\{\mu'\in \C:G_{1, \mu'}  \text{ is non-free}\}$; it is not hard to see it for $\mu = \pm4$ (see Remark \ref{remark bound is sharp}), and for $\mu=\pm 2i$ it was proved by several authors \cite[Corollary 2]{chang-ree}, \cite[Corollary 2]{ree}, \cite[Theorem 5]{lyndon}.
To put it differently, there is no neighborhood $U$ of $\mu$ where $\sigma(G_{1,\mu'}) = \infty$ for all $\mu'\in U$.

In the case of real parameters, we are able to give an explicit lower bound on the length of the subword from Theorem \ref{complex theorem}.
\begin{theorem}\label{real theorem}
    For every real $1.8 \le \lambda < 2$, every (nontrivial) relator of $G_{\lambda,2}$ admits a subword of the form $(A_\lambda B_2^{-1})^k$ or $(A_\lambda^{-1}B_2)^k$, where
    \begin{equation*}
        k > \frac{1}{4\arccos(\sqrt{\lambda/2})} - 1.
    \end{equation*}
    In particular, $\sigma(G_{\lambda,2}) > 2(\frac{1}{4\arccos(\sqrt{\lambda/2})} - 1)$.
\end{theorem}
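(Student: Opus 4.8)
The plan is to upgrade the proof of Theorem~\ref{complex theorem}(1) to a quantitative statement in the real case by carrying explicit constants through the ping-pong variant. Throughout, view $G_{\lambda,2}\subset\SL_2(\R)$ via its M\"obius action on the circle $\hat\R=\R\cup\{\infty\}$: $A_\lambda$ is parabolic with fixed point $\infty$, $B_2$ is parabolic with fixed point $0$, and the elements destined to appear in long runs, $g:=A_\lambda B_2^{-1}$ and $g':=A_\lambda^{-1}B_2$, are elliptic, both of trace $2-2\lambda\in(-2,2)$ for $\lambda\in(0,2)$. The first (and easy) step is to quantify their rotational dynamics: each is conjugate in $\PSL_2(\R)$ to a rotation $R_\phi$ of the boundary circle with $2\cos\phi=2-2\lambda$, and, since the derivative of $R_\phi$ at its fixed point in $\HH^2$ is $e^{-2i\phi}$, $g$ and $g'$ are conjugate to rigid rotations of $\hat\R$ through the angle $2\beta$ (up to orientation), where $\beta:=\pi-\phi=\arccos(\lambda-1)\in(0,\pi/2)$ for $\lambda\in[1.8,2)$. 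From $2-\lambda=1-\cos\beta=2\sin^2(\beta/2)\ge 2(\beta/\pi)^2$ (using $\sin t\ge 2t/\pi$ on $[0,\pi/2]$) one gets the explicit bound $2\beta\le\pi\sqrt{2(2-\lambda)}$, an angle shrinking to $0$ as $\lambda\to2$; equivalently, $g^k$ and $g'^k$ are conjugate to rotations through $2k\beta\le k\pi\sqrt{2(2-\lambda)}$.

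Next I would run the ping-pong on the fixed, $\lambda$-independent Sanov arcs $I_A=\hat\R\setminus(-1,1)\ni\infty$ and $I_B=[-1,1]\ni 0$. Since $B_2$ is independent of $\lambda$, the map $B_2^{\,n}$ sends $I_A$ into $I_B$ for every $n\ne0$, while $A_\lambda^{\,n}$ sends $I_B=[-1,1]$ onto $[\lambda n-1,\lambda n+1]\subset\inter I_A$ for every $|n|\ge 2$ and every $\lambda\in[1.8,2)$; thus the classical ping-pong fails only through the length-one $A$-syllables $A_\lambda^{\pm1}$, whose images $[\pm\lambda-1,\pm\lambda+1]$ overlap $I_B$ in a sub-arc of length $2-\lambda$. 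Reading off the offending two-letter products shows this failure is produced precisely by the elliptic elements $g^{\pm1},g'^{\pm1}$, which carry those small overlap sub-arcs back into $I_B$. This is exactly the configuration the ping-pong variant is built to process: fed the arcs $I_A,I_B$ and the bad family $\{g^{\pm1},g'^{\pm1}\}$, it yields that every nontrivial relator of $G_{\lambda,2}$ contains a \emph{maximal} subword equal to a power $g^{\pm k}$ or $g'^{\pm k}$ — that is, a subword of the form $(A_\lambda B_2^{-1})^{k}$ or $(A_\lambda^{-1}B_2)^{k}$, up to inverting the relator.

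It remains to bound $k$ from below, and this is where the variant's quantitative bookkeeping is used. Because the run is maximal, the relator's letters flanking it are $B_2^{\pm1}$, $A_\lambda^{\pm1}$, or absent, so the controlled dynamics apply on both sides; the run $g^k$ is then forced to transport a distinguished point of $\hat\R$ across a fixed angular gap $\theta_0>0$ in order for the relator to collapse to the identity, and since $I_A,I_B$ do not depend on $\lambda$, this $\theta_0$ is uniform over $\lambda\in[1.8,2)$. Working in the coordinate that rigidifies $g$, this says $2k\beta\ge\theta_0$, and combining with $2\beta\le\pi\sqrt{2(2-\lambda)}$ gives $k\ge C/\sqrt{2-\lambda}$ with $C:=\theta_0/(\pi\sqrt2)>0$. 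Finally, the $2k$ syllables of $(A_\lambda B_2^{-1})^k$ are reduced and alternating and (by maximality) do not merge with neighbouring syllables, so they persist in the relator; hence $\sigma(G_{\lambda,2})\ge 2k\ge 2C/\sqrt{2-\lambda}$, and if $G_{\lambda,2}$ is free the bound is vacuous.

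The main obstacle is the third step: isolating the uniform gap $\theta_0$ — i.e.\ proving that a relator really is dragged a bounded-below angular distance into the region where ping-pong breaks, and that this can only be undone by a genuinely long $g$- or $g'$-run — which requires verifying the hypotheses of the ping-pong variant with the overlap arcs estimated uniformly over the entire interval $[1.8,2)$, rather than merely near $2$ as in Theorem~\ref{complex theorem}. This is the step that actually produces the constant $C$. By contrast, the threshold $1.8$ carries no real content: it is simply a convenient left endpoint of a range on which $g$ and $g'$ are elliptic with rotation angle safely below $\theta_0$, so that isolated ``bad'' letters cannot substitute for an honest run. The trace and $\arccos$ computations, the concavity estimate, and the syllable count are all routine.
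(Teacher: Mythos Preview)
Your plan is the paper's plan: both identify $g=A_\lambda B_2^{-1}$ as elliptic with rotation angle on the order of $\sqrt{2-\lambda}$ (your $2\beta$ is exactly the paper's $4\theta$ via $\lambda=2\cos^2\theta$, so $\beta=2\theta$), both run the ping-pong variant on the Sanov intervals, and both conclude that the forced $\mathcal{B}$-subword must be long because a short $g$-run cannot rotate far enough. The step you flag as the ``main obstacle'' is precisely the content of the paper's proof, and in your write-up it is a genuine gap, not merely a routine verification.

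Concretely, your appeal to a ``distinguished point'' and a ``fixed angular gap $\theta_0$ in the coordinate that rigidifies $g$'' is not yet an argument: the conjugacy rigidifying $g$ moves with $\lambda$ (the elliptic fixed point is $z_\theta=\tfrac12(1+e^{2i\theta})$), so a $\lambda$-independent choice of arcs $I_A,I_B$ does not by itself produce a $\lambda$-independent angular gap after conjugation, and without specifying what the distinguished point is you cannot bound anything. The paper resolves this by introducing explicit inner compacta $K_1=[-\tfrac12,\tfrac12]$ and $K_2=\{|x|\ge\tfrac54\}\cup\{\infty\}$; the ping-pong variant (Corollary~\ref{corollary good words}) then says the long $\mathcal{B}$-subword must fail to send $K_2$ into $X_2$, which pins the ``distinguished point'' down as $\tfrac54$. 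The gap $\theta_0$ becomes the hyperbolic angle $\alpha$ at $z_\theta$ between the geodesics to $\tfrac54$ and to $1$, and Lemma~\ref{lemma K2 to X2} computes directly (using that $z_\theta$ lies on the geodesic $[0,1]$ and a one-line evaluation of the rotation formula) that $\alpha>1$ uniformly for $\lambda\ge1.8$. This yields $4\theta\cdot k>1$, hence $k\gtrsim\tfrac{1}{\sqrt{2-\lambda}}$. Your sketch contains the right picture but none of these choices; filling them in is the proof.

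Two smaller points. First, your arcs $I_A,I_B$ share the endpoints $\pm1$, and $B_2(-1)=1$ lands on that shared boundary, so the inclusions you assert are not strict; the paper avoids this by taking $X_1,X_2$ open (excluding $\pm1$ from both). Second, restricting the bad two-letter products to $g^{\pm1},g'^{\pm1}$ (and not $A_\lambda B_2$, $B_2A_\lambda$) also requires the $K_i$: the paper checks $a_\lambda b(X_2)\subset K_2$ and $ba_\lambda(X_1)\subset K_1$ explicitly, which is the verification of~\eqref{eq additional assumption}. Your ``reading off the offending two-letter products'' presupposes this.
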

This asymptotic lower bound is sharp up to a multiplicative constant; see Remark \ref{remark bound is sharp} for the details.

As $\lambda\to 2$, the generators $A_\lambda, B_2$ of $G_{\lambda, 2}$ converge to $A_2, B_2$ which generate the free group of rank $2$. 
Theorem \ref{complex theorem} then shows that $\sigma(G_{\lambda, 2})\to \sigma(G_{2, 2}) = \infty$.
We stress that in general, $\sigma$ is not necessarily semicontinuous, as the following example demonstrates.
\begin{example}
    For $\lambda\in\R$ let $C_\lambda = \begin{pmatrix}
        1 & 2\\
        \frac{\lambda-1}{2} & \lambda
    \end{pmatrix}\in \SL_2(\R)$.
    Then $C_1 = A_2$, and since $A_2, B_2$ freely generate $G_2$, $\sigma(\langle C_1, B_2\rangle) = \sigma(G_2) = \infty$.
    However, let us show that $\liminf_{\lambda\to 1}\sigma(\langle C_\lambda, B_2\rangle) = 1$. We recall that every element in $\SL_2(\R)$ can be classified as either elliptic, parabolic or hyperbolic according to its trace. An element $g\in \SL_2(\R)$ with $\lvert \tr g\rvert = \lvert 2\cos\theta\rvert< 2$ is elliptic, and acts on the hyperbolic half-plane $\HH^2$ as a (hyperbolic) rotation of angle $\pm2\theta$; for more details, see for example \cite{beardon-book, katok, benedetti}. Thus, if  we define $\lambda_n = 2\cos(\pi/n) - 1$ for every $n\in\N$, then we obtain a sequence $\lambda_n\to 1$ such that $C_{\lambda_n}$ is a rotation of angle $2\pi/n$. Hence, $C_{\lambda_n}^n$ acts trivially on $\HH^2$, so that $C_{\lambda_n}^{2n} = 1$, and the assertion follows.
\end{example}

These results are based on two variations of the ping-pong lemma that we introduce (Props. \ref{prop subword} and \ref{prop subword length}), which can be applied to a group regardless of whether it is free or not. We will apply them to the actions of the group $G_{\lambda,\mu}$ on the real and complex projective lines.
The difficulty when dealing with syllables lies in the fact that a single syllable may consist of an arbitrarily large power, thereby making it harder to control the behaviour of (the non-equicontinuous family) $\{A_\lambda^n:n\ne 0\}$ as $\lambda$ varies.
However, one key observation is that the majority of group elements act nicely on those spaces.
The failure of the ping-pong lemma can only be attributed to very specific elements, which must be present in every relator.
This observation will permit us to confine our attention to only finitely many group elements and use continuity arguments.
In Section \ref{section ping pong} we will develop these variations of the ping-pong lemma and define relevant notions. This will be done in general settings, and in Section \ref{section results} we will revisit the groups $G_{\lambda,\mu}$ and prove the main results.
We conclude with a discussion of follow-up questions and potential directions for further investigation, as outlined in Section \ref{section discussion}.
\section{The ping-pong lemma}\label{section ping pong}
In this section, we aim to formulate and prove two variations of the ping-pong lemma: one for a single group and one for a family of groups.
We begin by introducing a particular decomposition of words in the free group, which will be employed in the proofs.
Let $F=F_S$ be the free group generated by some set $S$ with $2\le \lvert S\rvert <\infty$.
\begin{definition}
    Let $w = s_{1}^{n_1}\cdots s_{k}^{n_k}\in F$ be a reduced word with $s_i \in S$.
    An \emph{exact subword} of $w$ is any nonempty subword composed of syllables, i.e., of the form $s_i^{n_i}\cdots s_j^{n_j}$ where $i\le j$.
\end{definition}
\begin{definition}\label{definition forbidden+f decom}
    Let $W\subseteq F$ be any subset. Thinking of $W$ as forbidden words, we define $\mathcal{F}(W) \subseteq F$ to be the collection of all nonempty words avoiding $W$, namely
    \[\mathcal{F}(W) = \{w\in F: w\text{ is nonempty and no exact subword of }w \text{ belongs to }W\}.\]
    Note that $W\cap\mathcal{F}(W)=\emptyset$.
    An \emph{$\mathcal{F}(W)$-decomposition} of a word $w\in F$ is a decomposition $w = w_1\cdots w_n$ into exact subwords, such that for every $i$:
    \begin{enumerate}
        \item $w_i \in W\cup \mathcal{F}(W)$,
        \item no two consecutive subwords belong to $\mathcal{F}(W)$, i.e., if $w_i \in \mathcal{F}(W)$ then $w_{i+1}\in W$.
    \end{enumerate}
\end{definition}
\begin{lemma}\label{lemma W-F(W) decomposition}
    Let $W\subseteq F$ be any subset. Then every nonempty word $w\in F$ has an $\mathcal{F}(W)$-decomposition.
\end{lemma}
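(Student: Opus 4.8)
The plan is to argue by strong induction on the number of syllables $\syl(w)$, peeling off blocks from the left. The induction has a built-in "escape hatch": if $w\in\mathcal{F}(W)$, then $w=w$ is already an $\mathcal{F}(W)$-decomposition consisting of a single block lying in $\mathcal{F}(W)$ (condition (2) being vacuous), so I would dispose of this case immediately and henceforth assume $w\notin\mathcal{F}(W)$, which by definition means that some exact subword of $w$ lies in $W$.

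Write $w=s_1^{n_1}\cdots s_k^{n_k}$ in reduced form. The idea is to extract a leading block $p\in\mathcal{F}(W)\cup\{\text{empty}\}$ immediately followed by a block in $W$, and then recurse on what remains. To locate the cut, I would let $j$ be the least index for which the prefix $s_1^{n_1}\cdots s_j^{n_j}$ is not in $\mathcal{F}(W)$; such $j$ exists because $w$ itself is not in $\mathcal{F}(W)$, and $1\le j\le k$. By minimality of $j$, the prefix $s_1^{n_1}\cdots s_{j-1}^{n_{j-1}}$ is empty or lies in $\mathcal{F}(W)$, so no exact subword of it belongs to $W$; consequently, any exact subword of $s_1^{n_1}\cdots s_j^{n_j}$ that belongs to $W$ must terminate at the syllable $s_j^{n_j}$. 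Fixing such a subword $s_a^{n_a}\cdots s_j^{n_j}\in W$, the prefix $p:=s_1^{n_1}\cdots s_{a-1}^{n_{a-1}}$ is either empty (when $a=1$) or an exact subword of $s_1^{n_1}\cdots s_{j-1}^{n_{j-1}}$, hence lies in $\mathcal{F}(W)$. This yields $w=p\cdot(s_a^{n_a}\cdots s_j^{n_j})\cdot q$ with $q:=s_{j+1}^{n_{j+1}}\cdots s_k^{n_k}$.

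To conclude: if $q$ is empty, then (deleting $p$ when $a=1$) the displayed factorization is already an $\mathcal{F}(W)$-decomposition, since the only possible $\mathcal{F}(W)$-block is $p$ and it is immediately followed by the $W$-block. If $q$ is nonempty, it has $\syl(q)=k-j<k$ syllables, so the induction hypothesis provides an $\mathcal{F}(W)$-decomposition $q=q_1\cdots q_m$; concatenating gives $w=p\cdot(s_a^{n_a}\cdots s_j^{n_j})\cdot q_1\cdots q_m$, a decomposition into exact subwords each lying in $W\cup\mathcal{F}(W)$. Condition (2) is met because the $W$-block separates $p$ from $q_1$, and the tail $q_1\cdots q_m$ already satisfies (2) by the inductive hypothesis. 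The recursion terminates since $\syl(q)<\syl(w)$.

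I do not expect a serious obstacle. The one place that requires care is checking that the leading block $p$ genuinely avoids \emph{all} of $W$, not merely the forbidden subwords that happen to end early; this is precisely why $j$ is taken minimal and the $W$-block is forced to end at position $j$, so that $p$ embeds as an exact subword of the $\mathcal{F}(W)$-word $s_1^{n_1}\cdots s_{j-1}^{n_{j-1}}$. The bookkeeping for condition (2) across the splice point, and the handling of the empty-$p$ and empty-$q$ degenerate cases, are routine.
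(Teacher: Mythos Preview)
Your proof is correct, and it takes a genuinely different route from the paper's. The paper argues by an extremal principle: among all decompositions of $w$ into exact subwords, it picks one that first maximizes the number of pieces lying in $W$, and then, subject to that maximum, minimizes the number of pieces lying in $\mathcal{F}(W)$; properties (1) and (2) are then verified by contradiction, since any violation would allow one to refine the decomposition and beat the extremum. Your argument is instead a constructive left-to-right induction on $\syl(w)$: locate the shortest prefix that drops out of $\mathcal{F}(W)$, split off a $W$-block ending at that position together with the preceding $\mathcal{F}(W)$-prefix, and recurse on the tail. Your approach yields an explicit algorithm for building the decomposition and makes termination obvious; the paper's approach is somewhat slicker and index-free but nonconstructive. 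Both are standard combinatorial devices, and for the purposes of the paper (where only existence of the decomposition is used, in the proof of Proposition~\ref{prop subword}) either suffices equally well.
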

\begin{proof}
    Let $w\in F$ be nonempty. Consider the collection of all decompositions of $w$ into exact subwords $w = w_1\cdots w_n$  such that $\big\lvert\{1\le i\le n: w_i \in W\}\big\rvert=k$, where $k\in\N\cup\{0\}$ is the maximal possible\footnote{Note that the empty word is not considered an exact subword, and hence $w$ has only finitely many decompositions.}. Among this collection, choose a decomposition $w = w_1\cdots w_n$ such that $\big\lvert\{1\le i\le n: w_i \in \mathcal{F}(W)\}\big\rvert$ is the minimal possible; we claim that this is an $\mathcal{F}(W)$-decomposition. Indeed, if for some $i$, $w_i\notin \mathcal{F}(W)$, then there exists a decomposition $w_i=v_1\, u\, v_2$, where $u\in W$ is an exact subword of $w$, and each $v_j$ is either an exact subword or the empty word.
    By the maximality of $k$, the decomposition $w_1\cdots w_{i-1}\, v_1\, u\, v_2\, w_{i+1}\cdots w_n$
    contains no more subwords from $W$ than the former decomposition $w_1\cdots w_n$. This implies that $w_i\in W$, yielding property $(1)$.
    Next, suppose for contradiction that $w_i, w_{i+1}\in \mathcal{F}(W)$. Since this decomposition has the minimal possible number of subwords in $\mathcal{F}(W)$ (among the specified collection), we conclude that $w_i\, w_{i+1}\notin \mathcal{F}(W)$. This means that we can decompose $w_i\, w_{i+1} = v_1\, u\, v_2$
    where once again $u\in W$ is an exact subword and $v_1, v_2$ are exact subwords or empty.
    Replacing $w_i\, w_{i+1}$ with $v_1\, u\, v_2$, we obtain another decomposition of $w$ which contains more than $k$ subwords from $W$, contradicting the maximality of $k$.
\end{proof}
Let $H=\langle a,b\rangle$ be a group acting on a set $X$. Let $F_2$ be the free group generated by the symbols $\overline{a}$ and $\overline{b}$.
From now on it will be important to distinguish between the free group elements $\overline{w}=\overline{w}(\overline{a}, \overline{b})\in F_2$ and the corresponding element $w = \overline{w}(a,b) \in H$.

For $x,y\in \{a,b\}$ we denote by $F_2^{\overline{x},\overline{y}}$ the collection of all words in $F_2$ whose first letter is $\overline{x}$ and last letter is $\overline{y}$.
\begin{definition}\label{def W_K to X}
    For any four subsets $C_1, C_2, D_1, D_2\subseteq X$ define $W(C_1, C_2, D_1, D_2)\subseteq F_2$ to be the collection of all $\overline{w}\in F_2$ such that
    \begin{equation}\label{eq W_K to X}
        \begin{cases}
            w(C_1)\subseteq D_2 & \text{if } \overline{w}\in F_2^{\overline{a},\overline{a}}\\
            w(C_2)\subseteq D_2 & \text{if } \overline{w}\in F_2^{\overline{a},\overline{b}}\\
            w(C_1)\subseteq D_1 & \text{if } \overline{w}\in F_2^{\overline{b},\overline{a}}\\
            w(C_2)\subseteq D_1 & \text{if } \overline{w}\in F_2^{\overline{b},\overline{b}}.
        \end{cases}
    \end{equation}
\end{definition}
For $i=1,2$, let $K_i\subseteq X_i\subseteq X$ be some subsets. We will be interested in the two sets $W(K_1, K_2, X_1, X_2)$ and $W(X_1, X_2, K_1, K_2)$, and it will be helpful to think about them in the following context: for any $n\ne 0$, $a^n\in H$ ($b^n\in H$) is a ping-pong player whose goal is to pass elements of $X_1$ into $X_2$ (from $X_2$ into $X_1$). Every word can be thought of as an iteration of ping-pong shots, according to its syllables. For example, the goal of $b^{-1}a^2$ is to pass $X_1$ back into $X_1$. Since we will deal with non-free groups, some words will fail to achieve this.
The former set is the collection of words that still manage to pass “easier” elements, namely, elements of $K_1$ and $K_2$, even if they possibly fail to pass some other elements of $X_1$ and $X_2$. The latter set consists of the outstanding players who not only manage to make the passes from $X_1$ and $X_2$, but to put them inside $K_1$ and $K_2$.

The following is our first variant of the ping-pong lemma.
Essentially, it states that every relator of the group, whose first and last letters are identical, contains a subword $w$ that fails to pass elements from $K_1$ and $K_2$ into $X_1$ and $X_2$, and furthermore, no subword of $w$ succeeds in passing $X_1$ and $X_2$ into $K_1$ and $K_2$. This will prove useful in the next section, as we will see that with a suitable choice of $X_1,X_2,K_1,K_2$ there are relatively few words $w$ exhibiting these properties.
\begin{proposition}\label{prop subword}
    Let $H=\langle a,b\rangle$ be a group acting on a set $X$. Let $F_2$ be the free group generated by the symbols $\overline{a}$ and $\overline{b}$.
    Let $X_1, X_2 \subseteq X$ be disjoint, and $K_i\subseteq X_i$ be nonempty for $i=1,2$. Denote
    \[ W_{X\to K} = W(X_1, X_2, K_1, K_2) \subseteq F_2 \qquad \text{and} \qquad W_{K\to X} = W(K_1, K_2, X_1, X_2)\subseteq F_2.\]
    Then every relator of $H$, whose first and last letters are identical, has an exact subword that belongs to $\mathcal{F}(W_{X\to K})\setminus W_{K\to X}$.
    
    If in addition, there exists $N\in \N$ such that $\overline{a}^k\in W_{X\to K}$ for every $\lvert k\rvert \ge N$, then the assumption on the first and last letters of the relator can be dropped.
\end{proposition}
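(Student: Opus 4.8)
The proof will run by contradiction: suppose $\overline{w}$ is a relator of $H$ (so $w=1$ in $H$) with first and last letters identical, and suppose that no exact subword of $\overline{w}$ lies in $\mathcal{F}(W_{X\to K})\setminus W_{K\to X}$.  We want to derive a contradiction by playing ping-pong and showing $w$ cannot act as the identity on $X$.  The combinatorial engine is Lemma~\ref{lemma W-F(W) decomposition}: applied with $W=W_{K\to X}$, it gives an $\mathcal{F}(W_{K\to X})$-decomposition $\overline{w}=\overline{w}_1\cdots\overline{w}_n$ into exact subwords, where each $\overline{w}_i\in W_{K\to X}\cup\mathcal{F}(W_{K\to X})$ and no two consecutive factors lie in $\mathcal{F}(W_{K\to X})$.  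The strategy is to feed an element of $K_1$ or $K_2$ into this product of maps, reading right to left, and track which of $X_1$ or $X_2$ the image lands in after each stage.

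**Key steps.**  First, by our contradiction hypothesis, any exact subword $\overline{w}_i$ that lies in $\mathcal{F}(W_{X\to K})$ must actually lie in $W_{K\to X}$ (otherwise it would be the forbidden kind of subword).  Hence in the decomposition every factor lies in $W_{K\to X}$, \emph{except possibly} the factors of $\mathcal{F}(W_{K\to X})$-type; but a factor of $\mathcal{F}(W_{K\to X})$-type by definition is \emph{not} in $W_{K\to X}$, so by the previous sentence it is also not in $\mathcal{F}(W_{X\to K})$ — meaning it \emph{does} contain an exact subword lying in $W_{X\to K}$.  The upshot: each factor $\overline{w}_i$ either lies in $W_{K\to X}$ (it makes the strong pass, into $K_1$ or $K_2$), or it contains an exact subword in $W_{X\to K}$ (some piece of it makes at least the weak pass, $X_1,X_2$ into $K_1,K_2$ — wait, that is the wrong direction).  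I need to be careful here about orientation.  The correct bookkeeping: reading from the right, start with a point in $K_{j}$; a factor in $W_{K\to X}$ guarantees it sends $K_1\to X_?$ and $K_2\to X_?$ with the target index determined by the first letter of the factor (rules~\eqref{eq W_K to X}).  So after applying $\overline{w}_i$ we know only that we are in $X_1$ or $X_2$, not in $K_1$ or $K_2$.  To continue, the \emph{next} factor to the left must again accept an $X$-input and it can only be guaranteed to do so if it is of $W_{X\to K}$-type.  This is exactly why condition~(2) of the decomposition (no two consecutive $\mathcal{F}$-factors) is paired with the two sets $W_{X\to K}$ and $W_{K\to X}$ being dual: a $W_{K\to X}$ factor takes $K$'s to $X$'s, then we need a $W_{X\to K}$-ish factor to bring $X$'s back to $K$'s before the next $W_{K\to X}$ step.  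So the real argument should decompose with respect to $W_{X\to K}$, not $W_{K\to X}$; let me restate.

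**Corrected bookkeeping.**  Apply Lemma~\ref{lemma W-F(W) decomposition} with $W=W_{X\to K}$ to get $\overline{w}=\overline{w}_1\cdots\overline{w}_n$ with each $\overline{w}_i\in W_{X\to K}\cup\mathcal{F}(W_{X\to K})$ and no two consecutive in $\mathcal{F}(W_{X\to K})$.  By the contradiction hypothesis, every $\overline{w}_i\in\mathcal{F}(W_{X\to K})$ must lie in $W_{K\to X}$ — but also no exact subword of such $\overline{w}_i$ lies in $W_{X\to K}$, in particular $\overline{w}_i\notin W_{X\to K}$, consistent.  Now read right to left.  Claim: we can choose a point $x$ in $K_1\cup K_2$ such that after applying $\overline{w}_n,\overline{w}_{n-1},\dots$ in turn, at each stage the image lies in $X_1$ or $X_2$, and crucially in $K_1\cup K_2$ right after every $W_{K\to X}$-factor.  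A $W_{X\to K}$-factor sends $X_1,X_2$ into $K_1,K_2$; a $W_{K\to X}$-factor sends $K_1,K_2$ into $X_1,X_2$; by condition~(2) these two types alternate in the sense needed (two $W_{X\to K}$-type factors in a row is fine since $K\subset X$; it's two $\mathcal{F}$-factors, i.e. two $W_{K\to X}$-factors, that is forbidden — and that is precisely the bad case where we'd have an $X$-point needing to be fed to a factor only guaranteed on $K$).  Choosing the starting index so that the relevant membership $X_1$ vs $X_2$ is compatible with the first/last letter condition, we conclude the whole word $w$ sends some point of $X_1\cup X_2$ to a point of $X_1\cup X_2$ whose index is \emph{forced} by the first letter of $\overline{w}$: if $\overline{w}$ starts with $\overline{a}$ the final image is in $X_2$, if with $\overline{b}$ it's in $X_1$.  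Since $\overline{w}$ starts and ends with the same letter, this contradicts $w=1$ (identity fixes the point, but $X_1\cap X_2=\emptyset$).  For the final addendum: if $\overline{a}^k\in W_{X\to K}$ for all $|k|\ge N$, then given an arbitrary relator we may cyclically conjugate it — cyclic conjugates of a relator are relators — to one whose first and last letters coincide, \emph{unless} the relator is a single syllable $\overline{a}^{\pm k}$ or $\overline{b}^{\pm k}$; but in the $\overline{a}$-case that syllable already lies in $W_{X\to K}\subset$ things we handle, and the $\overline{b}$-case is handled symmetrically after swapping roles (or is excluded), so the hypothesis on first/last letters is unnecessary.  (One technical point: a cyclic conjugate has a subword of the required type, and a subword of a cyclic conjugate is a subword of the original up to cyclic rotation — need to check the conclusion transfers, which it does since the conclusion is about existence of an exact subword and exact subwords are preserved under the reduction.)

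**Main obstacle.**  The delicate part is getting the orientation and the alternation exactly right — matching conditions~(1) and~(2) of the $\mathcal{F}(W)$-decomposition to the two transfer sets $W_{X\to K}$, $W_{K\to X}$ so that a valid "chain of passes" exists, and in particular handling the boundary factors $\overline{w}_1$ and $\overline{w}_n$ so that the first-and-last-letter hypothesis is actually what forces the contradiction.  I also expect the cyclic-reduction argument for the last sentence to require a little care: after cyclic conjugation the word may no longer be reduced (a cancellation at the seam), and one must argue that either the reduced form still has a subword of the desired type or the relator was short enough ($\overline{a}^{\pm k}$) to be covered directly by the hypothesis $\overline{a}^k\in W_{X\to K}$.
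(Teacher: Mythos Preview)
Your main argument is correct and essentially the paper's own: apply Lemma~\ref{lemma W-F(W) decomposition} with $W=W_{X\to K}$, observe that under the contrapositive hypothesis every $\mathcal{F}(W_{X\to K})$-factor lies in $W_{K\to X}$, and then track a point of $K_1$ or $K_2$ through the factors (the paper packages this tracking as Lemma~\ref{lemma concat} plus a short induction, but the content is identical).

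The gap is in your treatment of the second assertion. Cyclic conjugation cannot in general produce a relator whose first and last letters are both powers of the same generator: if $\overline{w}$ has an even number of syllables, say $\overline{w}=\overline{a}^{n_1}\overline{b}^{n_2}\cdots\overline{a}^{n_{2k-1}}\overline{b}^{n_{2k}}$, then \emph{every} cyclic permutation begins with a power of one generator and ends with a power of the other, so the first part never applies. Moreover, even when cyclic conjugation does work, an exact subword of the rotated word may straddle the seam and fail to be an exact subword of the original, so the conclusion would not transfer back; you flag this as a ``technical point'' but it is a genuine obstruction, not a formality.

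The paper avoids both problems by conjugating rather than cyclically permuting: set $\overline{w}'=\overline{a}^m\,\overline{w}\,\overline{a}^{-m}$ with $m$ large enough that the first and last syllables of $\overline{w}'$ are $\overline{a}^k,\overline{a}^{k'}$ with $|k|,|k'|\ge N$. Then $\overline{w}'$ is a relator in $F_2^{a,a}$, so the first part yields an exact subword $\overline{u}\in\mathcal{F}(W_{X\to K})\setminus W_{K\to X}$. Since $\overline{a}^k,\overline{a}^{k'}\in W_{X\to K}$ by hypothesis, $\overline{u}$ cannot contain either of the outer syllables, hence $\overline{u}$ is already an exact subword of $\overline{w}$. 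This is where the hypothesis on large powers of $\overline{a}$ is actually used---not for single-syllable relators, as your sketch suggests.
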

We will need the following lemma.
\begin{lemma}\label{lemma concat}
    Let $\overline{w}\in F_2$ be nonempty and $\overline{w}= \overline{w}_1\cdots \overline{w}_n$ be a decomposition into exact subwords that alternate between $W_{K\to X}$ and $W_{X\to K}$, i.e., either $\overline{w}_{2i}\in W_{K\to X}$ and $\overline{w}_{2i+1}\in W_{X\to K}$ or the other way around. Then $\overline{w}\in W_{K\to X}$.
\end{lemma}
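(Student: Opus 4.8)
The plan is to prove the lemma by a direct ping--pong chaining argument: I apply the word $w=w_1\cdots w_n$ to a single ``source'' set and track the successive images under $w_n,w_{n-1},\dots,w_1$. First I would fix uniform bookkeeping notation for the two indices attached to a word $\overline{u}\in F_2$: its \emph{source index} $s(\overline{u})\in\{1,2\}$, determined by the \emph{last} letter of $\overline{u}$ ($1$ for $\overline{a}$, $2$ for $\overline{b}$), and its \emph{target index} $t(\overline{u})\in\{1,2\}$, determined by the \emph{first} letter ($2$ for $\overline{a}$, $1$ for $\overline{b}$). Unravelling Definition \ref{def W_K to X} then says exactly: $\overline{u}\in W_{K\to X}$ iff $u(K_{s(\overline{u})})\subset X_{t(\overline{u})}$, and $\overline{u}\in W_{X\to K}$ iff $u(X_{s(\overline{u})})\subset K_{t(\overline{u})}$; in particular, since $K_i\subset X_i$, every element of $W_{X\to K}$ lies in $W_{K\to X}$. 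The only structural fact I need about the decomposition is that $s(\overline{w}_m)=t(\overline{w}_{m+1})$ for $1\le m<n$: the last letter of $\overline{w}_m$ and the first letter of $\overline{w}_{m+1}$ are the base letters of two consecutive syllables of $\overline{w}$ (because the $\overline{w}_i$ are exact subwords), hence distinct, and checking the two cases gives the equality.

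Next I would record what the alternation hypothesis buys us: there is a fixed parity $\epsilon\in\{0,1\}$ such that $\overline{w}_m\in W_{X\to K}$ for every $m\equiv\epsilon\pmod 2$ and $\overline{w}_m\in W_{K\to X}$ for every $m\not\equiv\epsilon\pmod 2$. The point I will exploit is that a factor of the second kind is always immediately followed on its right by one of the first kind: if $m\not\equiv\epsilon$ and $m<n$, then $m+1\equiv\epsilon$, so $\overline{w}_{m+1}\in W_{X\to K}$.

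The core is a downward induction on $m=n,n-1,\dots,1$ for the set $S_m:=(w_m w_{m+1}\cdots w_n)\big(K_{s(\overline{w})}\big)$, where $s(\overline{w})=s(\overline{w}_n)$. The claimed invariant is: $S_m\subset K_{t(\overline{w}_m)}$ if $m\equiv\epsilon$, and $S_m\subset X_{t(\overline{w}_m)}$ if $m\not\equiv\epsilon$. For the base case $S_n=w_n\big(K_{s(\overline{w}_n)}\big)$: if $n\equiv\epsilon$ use $\overline{w}_n\in W_{X\to K}$ and $K_{s(\overline{w}_n)}\subset X_{s(\overline{w}_n)}$ to land in $K_{t(\overline{w}_n)}$; otherwise $\overline{w}_n\in W_{K\to X}$ lands $K_{s(\overline{w}_n)}$ in $X_{t(\overline{w}_n)}$. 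For the step, $S_m=w_m(S_{m+1})$ and $t(\overline{w}_{m+1})=s(\overline{w}_m)$: if $m\equiv\epsilon$, then whatever the parity of $m+1$, the invariant at $m+1$ together with $K_i\subset X_i$ gives $S_{m+1}\subset X_{s(\overline{w}_m)}$, and $\overline{w}_m\in W_{X\to K}$ pushes this into $K_{t(\overline{w}_m)}$; if $m\not\equiv\epsilon$, then $m+1\equiv\epsilon$ forces $S_{m+1}\subset K_{s(\overline{w}_m)}$, and $\overline{w}_m\in W_{K\to X}$ pushes this into $X_{t(\overline{w}_m)}$. Taking $m=1$ yields $S_1=w\big(K_{s(\overline{w})}\big)\subset X_{t(\overline{w}_1)}=X_{t(\overline{w})}$ (using $K_i\subset X_i$ once more when $1\equiv\epsilon$), which is precisely the assertion $\overline{w}\in W_{K\to X}$.

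The one genuine subtlety — the reason for the middle paragraph — is that $W_{K\to X}$ is \emph{not} closed under (syllable--respecting) concatenation, so one cannot argue by collapsing partial products; instead one must carry the honest image sets along and use that every $W_{K\to X}$--factor is shielded on its right by a $W_{X\to K}$--factor, which is exactly what keeps the running image inside the small sets $K_i$ at the moments when the next factor can only digest a subset of $K_i$. Everything else — the letter bookkeeping defining $s(\cdot),t(\cdot)$, the identity $s(\overline{w}_m)=t(\overline{w}_{m+1})$, and the inclusions in the base and inductive steps — is routine once the invariant is phrased correctly.
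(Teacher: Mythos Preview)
Your proof is correct and is essentially the same right-to-left chaining argument as the paper's, only made fully explicit: the paper treats one representative case (say $\overline{w}_n\in W_{K\to X}$ with specific first/last letters), computes $w_{n-1}w_n(K_1)\subset w_{n-1}(X_2)\subset K_2$, and then says ``proceeding in the same manner'' while invoking $K_i\subset X_i$ for the last step, whereas you package the same computation into the source/target indices $s(\cdot),t(\cdot)$ and a formal downward induction with the invariant on $S_m$. One harmless redundancy: in your inductive step when $m\equiv\epsilon$, the parity of $m+1$ is forced to be $\not\equiv\epsilon$, so the phrase ``whatever the parity of $m+1$'' is unnecessary (though not incorrect).
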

\begin{proof}[Proof of Lemma \ref{lemma concat}]
    The proof proceeds by induction on $n$. Since $K_i\subseteq X_i$ for $i=1,2$, we have $W_{X\to K}\subseteq W_{K\to X}$, and thus the base case $n=1$ follows from the assumption.
    Now, suppose we are given a word $\overline{w}$ as above with $n>1$. Assume further that $\overline{w}_n\in W_{K\to X}$ (and hence $\overline{w}_{n-1}\in W_{X\to K}$), and that the first and last letters of $\overline{w}_{n}$, as well as the first letter of $\overline{w}_{n-1}$, are $\overline{a}$; the proof for all other cases is analogous. Since $\overline{w}_{n-1}$ and $\overline{w}_n$ are exact subwords, the last letter of $\overline{w}_{n-1}$ must differ from the first letter of $\overline{w}_n$, implying that it must be $\overline{b}$. This gives us the inclusion
    \[w_{n-1}w_n(K_1) \subseteq w_{n-1}(X_2) \subseteq K_2.\]
    If $\overline{w}=\overline{w}_{n-1}\overline{w}_n$ then the proof is complete. Otherwise, by the same reasoning, the last letter of $\overline{w}_{n-2}$ must be $\overline{b}$. Applying the induction hypothesis and the previous inclusion, we obtain
    \[w_1\cdots w_n(K_1)\subseteq w_1\cdots w_{n-2}(K_2)\subseteq X_i,\]
    where $i\in\{1,2\}$ depends on the first letter of $\overline{w}_1$.
\end{proof}
\begin{proof}[Proof of Prop. \ref{prop subword}]
    Let $\overline{w}\in F_2$ be a nonempty word that has no exact subword from $\mathcal{F}(W_{X\to K})\setminus W_{K\to X}$, and let $\overline{w}= \overline{w}_1\cdots \overline{w}_n$ be some $\mathcal{F}(W_{X\to K})$-decomposition (Lemma \ref{lemma W-F(W) decomposition}).
    We will show by induction on $\big\lvert\{i:\overline{w}_i \notin W_{X\to K}\}\big\rvert$ that $\overline{w}\in W_{K\to X}$. If $\overline{w}_i\in W_{X\to K}$ for all $i$, then $\overline{w}\in W_{K\to X}$; this can be seen as a simple case of Lemma \ref{lemma concat} since $W_{X\to K}\subseteq W_{K\to X}$.
    Next, suppose that for some $i$, $\overline{w}_i\notin W_{X\to K}$. By the first property of the $\mathcal{F}(W_{X\to K})$-decomposition (Definition \ref{definition forbidden+f decom}) $\overline{w}_i\in \mathcal{F}(W_{X\to K})$ and by our assumption on $\overline{w}$, $\overline{w}_i\in W_{K\to X}$.
    If $i=1$ let us define $\overline{w}_{i-1}$ to be the empty word, and otherwise,
    the second property of the $\mathcal{F}(W_{X\to K})$-decomposition implies that $\overline{w}_{i-1}\in W_{X\to K}$. Similarly, $\overline{w}_{i+1}$ is either empty if $i=n$ or belongs to $W_{X\to K}$ as well.
    If $i>2$, then $\overline{u}_1\coloneqq\overline{w}_1\cdots\overline{w}_{i-2}\in W_{K\to X}$ by the induction hypothesis and if $i\le 2$ we let $\overline{u}_1$ be the empty word. Similarly, $\overline{u}_2\coloneqq\overline{w}_{i+2}\cdots \overline{w}_n\in W_{K\to X}$ if $i<n-1$ and $\overline{u}_2$ is empty otherwise.
    Decomposing $\overline{w}$ as $\overline{w} = \overline{u}_1\,\overline{w}_{i-1}\,\overline{w}_i\,\overline{w}_{i+1}\,\overline{u}_2$,
    we deduce from Lemma \ref{lemma concat} that $\overline{w} \in W_{K\to X}$, completing the induction.
    In particular, if $\overline{w}\in F_2^{\overline{a},\overline{a}}$ then $w(K_1)\subseteq X_2$ which implies that $w \ne 1$, and a similar argument applies if $\overline{w}\in F_2^{\overline{b},\overline{b}}$.
    
    For the second part, let $\overline{w}$ be a relator of $H$. Choose $m$ such that the first and last syllables of $\overline{w}' \coloneqq \overline{a}^m\, \overline{w}\, \overline{a}^{-m}$ are $\overline{a}^k, \overline{a}^{k'}$ for some $\lvert k \rvert, \lvert k' \rvert \ge N$. Then $\overline{w}'$ has an exact subword from $\mathcal{F}(W_{X\to K})\setminus W_{K\to X}$, and it must also be an exact subword of $\overline{w}$.
\end{proof}
\begin{corollary} \label{corollary good words}
    Keeping the notation of Proposition \ref{prop subword}, suppose that
    \begin{equation*}
        \{\overline{a}^n, \overline{b}^n: \lvert n\rvert\ge 2\}\subseteq W_{X\to K},
    \end{equation*}
    and denote
    \begin{equation}\label{set A}
        \mathcal{A} = \{\overline{w}\in F_2:\text{all syllables of }\overline{w}\text{ are either }\overline{a},\overline{a}^{-1},\overline{b} \text{ or }\overline{b}^{-1}\}
    \end{equation}
    (e.g.\ $w=aba^{-1}b$).
    Then every relator of $H$ has an exact subword that belongs to $\mathcal{A}\setminus W_{K\to X}$.
    
    If in addition
    \begin{equation}\label{eq additional assumption}
        \{(\overline{a}\overline{b})^{\pm1}, (\overline{b}\overline{a})^{\pm1}\}\subseteq W_{X\to K},
    \end{equation}
    Then every relator of $H$ has an exact subword that belongs to $\mathcal{B}\cup\mathcal{B}^{-1}\setminus W_{K\to X}$, where
    \begin{equation}\label{set B}
        \mathcal{B} = \{(\overline{a}\overline{b}^{-1})^k,\quad (\overline{b}^{-1}\overline{a})^k,\quad (\overline{a}\overline{b}^{-1})^{k-1}\overline{a},\quad (\overline{b}^{-1}\overline{a})^{k-1}\overline{b}^{-1}: k>0\}
    \end{equation}
    and $\mathcal{B}^{-1} = \{\overline{w}^{-1}:\overline{w}\in\mathcal{B}\}$.
\end{corollary}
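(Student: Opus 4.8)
The plan is to derive Corollary \ref{corollary good words} as an essentially immediate consequence of Proposition \ref{prop subword}; all of the content lies in translating the hypotheses ``$\{\overline{a}^n,\overline{b}^n:\lvert n\rvert\ge 2\}\subset W_{X\to K}$'' and ``$\{(\overline{a}\overline{b})^{\pm1},(\overline{b}\overline{a})^{\pm1}\}\subset W_{X\to K}$'' into restrictions on which words can possibly lie in $\mathcal{F}(W_{X\to K})$.

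For the first assertion, I would first observe that the hypothesis on $\{\overline{a}^n:\lvert n\rvert\ge 2\}$ is exactly what is needed to apply the second part of Proposition \ref{prop subword} with $N=2$, so that every relator of $H$ has an exact subword $\overline{v}\in\mathcal{F}(W_{X\to K})\setminus W_{K\to X}$. The key point is then that each syllable $\overline{x}^n$ (with $x\in\{a,b\}$) of $\overline{v}$ is itself an exact subword of $\overline{v}$, and hence cannot belong to $W_{X\to K}$; since $\overline{x}^n\in W_{X\to K}$ whenever $\lvert n\rvert\ge 2$, every exponent occurring in $\overline{v}$ must be $\pm1$, i.e.\ $\overline{v}\in\mathcal{A}$. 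Combined with $\overline{v}\notin W_{K\to X}$, this gives $\overline{v}\in\mathcal{A}\setminus W_{K\to X}$.

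For the second assertion, I would run the same argument, now additionally using that $W_{X\to K}$ contains the four length-two words $\overline{a}\overline{b}$, $\overline{b}\overline{a}$, $\overline{a}^{-1}\overline{b}^{-1}$, $\overline{b}^{-1}\overline{a}^{-1}$ (which are $(\overline{a}\overline{b})^{\pm1}$ and $(\overline{b}\overline{a})^{\pm1}$). This again produces an exact subword $\overline{v}\in\mathcal{F}(W_{X\to K})\setminus W_{K\to X}$ with all exponents $\pm1$; writing $\overline{v}=\overline{x}_1^{\epsilon_1}\cdots\overline{x}_m^{\epsilon_m}$ and using that $F_2$ is free on the two generators $\overline{a},\overline{b}$, the $x_i$ must alternate between $a$ and $b$. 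Each consecutive pair $\overline{x}_i^{\epsilon_i}\overline{x}_{i+1}^{\epsilon_{i+1}}$ is an exact subword of $\overline{v}$, hence avoids $W_{X\to K}$; of the eight conceivable alternating two-syllable words, discarding the four forbidden ones leaves only $\overline{a}\,\overline{b}^{-1}$, $\overline{a}^{-1}\overline{b}$, $\overline{b}\,\overline{a}^{-1}$, $\overline{b}^{-1}\overline{a}$ as admissible consecutive pairs. This amounts to a deterministic rule for passing from one syllable to the next (effectively $\overline{a}\leftrightarrow\overline{b}^{-1}$ and $\overline{a}^{-1}\leftrightarrow\overline{b}$), so $\overline{v}$ is determined by its first syllable up to length; checking the four possibilities for the first syllable shows that $\overline{v}$ is one of $(\overline{a}\overline{b}^{-1})^k$, $(\overline{b}^{-1}\overline{a})^k$, $(\overline{a}\overline{b}^{-1})^{k-1}\overline{a}$, $(\overline{b}^{-1}\overline{a})^{k-1}\overline{b}^{-1}$, or an inverse of such a word, i.e.\ $\overline{v}\in\mathcal{B}\cup\mathcal{B}^{-1}$, and we are done since $\overline{v}\notin W_{K\to X}$.

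I do not expect a genuine obstacle here. The only place that requires care is the final bookkeeping in the second part: confirming that the words produced by the transition rule when the first syllable is $\overline{a}^{-1}$ or $\overline{b}$ are precisely the inverses of the generators listed in $\mathcal{B}$ (for instance $(\overline{a}^{-1}\overline{b})^{k-1}\overline{a}^{-1}=\big((\overline{a}\overline{b}^{-1})^{k-1}\overline{a}\big)^{-1}$), and verifying that the degenerate case $k=1$, which gives the single-syllable words $\overline{a}^{\pm1},\overline{b}^{\pm1}$, is already subsumed in $\mathcal{B}\cup\mathcal{B}^{-1}$.
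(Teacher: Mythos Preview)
Your proposal is correct and follows essentially the same route as the paper: both arguments amount to showing $\mathcal{F}(W_{X\to K})\subset\mathcal{A}$ (respectively $\mathcal{F}(W_{X\to K})\subset\mathcal{B}\cup\mathcal{B}^{-1}$) by the syllable/transition analysis you describe, and then invoking Proposition~\ref{prop subword}. You spell out more explicitly than the paper does why the second part of Proposition~\ref{prop subword} (with $N=2$) lets you drop the first/last letter restriction, but the underlying argument is the same.
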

\begin{proof}
    In every word that admits no exact subword from $W_{X\to K}$, every letter $\overline{a}^{\pm1}$ can be followed only by $\overline{b}^{\pm1}$ and vice versa. Thus, $\mathcal{F}(W_{X\to K}) \subseteq \mathcal{A}$.
    Under the additional assumption \eqref{eq additional assumption}, in every word in $\mathcal{F}(W_{X\to K})$ each letter $\overline{a}, \overline{a}^{-1}, \overline{b}, \overline{b}^{-1}$ can be followed only by the letter $\overline{b}^{-1},\overline{b}, \overline{a}^{-1}, \overline{a}$ respectively. Therefore in this case, $\mathcal{F}(W_{X\to K}) \subseteq \mathcal{B}\cup\mathcal{B}^{-1}$. The corollary now follows from Proposition \ref{prop subword}.
\end{proof}
Until now, we have focused solely on a single group generated by the elements $a,b$. We will now consider the scenario where $a,b$ vary.
To be more precise, let $H$ be a topological group and $\Lambda$ a metric space. Then we let $a,b:\Lambda\to H$ be continuous functions. Given $\lambda\in\Lambda$, it will be convenient to denote $a_\lambda\coloneqq a(\lambda)$ and $b_\lambda\coloneqq b(\lambda)$.
Suppose that $H$ acts continuously on a metric space $X$. For subsets $X_1, X_2, K_1, K_2\subseteq X$, let us denote
\[ W_{X\to K}^\lambda = W(X_1, X_2, K_1, K_2) \qquad \text{and} \qquad W_{K\to X}^\lambda = W(K_1, K_2, X_1, X_2).\]
where the right-hand side of both is defined with respect to the group $H_\lambda\coloneqq \langle a_\lambda, b_\lambda\rangle$ (see Definition \ref{def W_K to X}). We recall that these are subsets of the free group $F_2$ generated by the symbols $\overline{a}$ and $\overline{b}$. Here is our second variant of the ping-pong lemma.
\begin{proposition}\label{prop subword length}
    In the above settings, suppose that $X_1$ and $X_2$ are disjoint and open, and that $K_i\subseteq X_i$ are nonempty compact subsets for $i=1,2$.
    Suppose further that for some $\lambda_0\in \Lambda$ and every $n\ne 0$, $a_{\lambda_0}^n X_1 \subseteq X_2$ and $b_{\lambda_0}^n X_2 \subseteq X_1$.
    Then for every $N\in \N$ there exists a neighborhood $U\subseteq \Lambda$ of $\lambda_0$ with the following property:
    let $\lambda\in U$, and $\overline{w}$ a relator of $H_\lambda$ whose first and last letters are identical.
    Then $\overline{w}$ has an exact subword from $\mathcal{F}(W_{X\to K}^\lambda)$ whose length is at least $N$.
    If in addition, there exists $M\in \N$ such that $\overline{a}^k\in W_{X\to K}^\lambda$ for every $\lvert k\rvert \ge M$ and $\lambda\in\Lambda$, then the assumption on the first and last letters of $\overline{w}$ can be dropped.
\end{proposition}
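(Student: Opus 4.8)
The plan is to invoke Proposition \ref{prop subword} for the group $H_\lambda$ and then use a compactness argument to force the exact subword it produces to be long. For every $\lambda$ the hypotheses of Proposition \ref{prop subword} hold for the quadruple $X_1,X_2,K_1,K_2$ (the $X_i$ are disjoint and the $K_i\subset X_i$ are nonempty), so every relator $\overline w$ of $H_\lambda$ whose first and last letters agree has an exact subword $\overline v\in\mathcal F(W_{X\to K}^\lambda)\setminus W_{K\to X}^\lambda$. Hence it suffices to produce, for each $N\in\N$, a neighborhood $U$ of $\lambda_0$ such that for every $\lambda\in U$ every word $\overline v\in F_2$ with $\len(\overline v)<N$ already lies in $W_{K\to X}^\lambda$: then the subword supplied by Proposition \ref{prop subword}, which does not lie in $W_{K\to X}^\lambda$, must have length at least $N$, and since it lies in $\mathcal F(W_{X\to K}^\lambda)$ it is exactly the subword required.

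The crux is the observation that at the single parameter $\lambda_0$, \emph{every} nonempty $\overline v\in F_2$ lies in $W_{K\to X}^{\lambda_0}$. To prove this one writes $\overline v=s_1^{n_1}\cdots s_k^{n_k}$ in reduced form and tracks the image of the relevant source set under the syllables from right to left. The rightmost syllable acts on $K_1\subset X_1$ if $s_k=\overline a$ and on $K_2\subset X_2$ if $s_k=\overline b$; by the hypotheses $a_{\lambda_0}^n X_1\subset X_2$ and $b_{\lambda_0}^n X_2\subset X_1$ for all $n\ne 0$, the image after applying $s_k^{n_k}$ lies in $X_2$ (resp.\ $X_1$). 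Since a reduced word alternates between powers of $\overline a$ and powers of $\overline b$, the next syllable always receives precisely the set it is permitted to move, so inductively the images alternate between $X_2$ and $X_1$; after the leftmost syllable is applied, the image lies in $X_2$ when $s_1=\overline a$ and in $X_1$ when $s_1=\overline b$, which is exactly the target prescribed in Definition \ref{def W_K to X}. Thus $\overline v\in W_{K\to X}^{\lambda_0}$. (The alternation is precisely what keeps the inclusions $a_{\lambda_0}^n X_2$ and $b_{\lambda_0}^n X_1$, about which nothing is assumed, out of the tracking.)

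Next, for a fixed $\overline v$ the membership $\overline v\in W_{K\to X}^\lambda$ amounts, in the notation of Definition \ref{def W_K to X}, to a single inclusion $v(K_i)\subset X_j$, where $v$ denotes the image of $\overline v$ in $H_\lambda$. Because $a,b\colon\Lambda\to H$ are continuous, $H$ is a topological group, and $H$ acts continuously on $X$, the map $(\lambda,x)\mapsto\overline v(a_\lambda,b_\lambda)(x)$ is continuous; as $K_i$ is compact and $X_j$ is open, the tube lemma shows that $\{\lambda:\overline v\in W_{K\to X}^\lambda\}$ is open, and by the previous paragraph it contains $\lambda_0$. There are only finitely many reduced words in $F_2$ of length less than $N$, so intersecting the corresponding finitely many open neighborhoods of $\lambda_0$ yields an open neighborhood $U$ of $\lambda_0$ on which every word of length less than $N$ lies in $W_{K\to X}^\lambda$; this gives the first assertion. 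For the final assertion, the added hypothesis that $\overline a^k\in W_{X\to K}^\lambda$ for all $\lvert k\rvert\ge M$ and all $\lambda$ is exactly what the second half of Proposition \ref{prop subword} needs in order to supply the subword $\overline v$ with no restriction on the first and last letters of $\overline w$; the rest of the argument is unchanged, and $U$ depends only on $N$.

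I expect the only delicate point to be the bookkeeping in the second paragraph: one must check, for each of the four possibilities for the pair of first and last letters of $\overline v$, that tracking $K_i$ through the partial ping-pong data at $\lambda_0$ really lands in the set $D_1$ or $D_2$ dictated by Definition \ref{def W_K to X}. This is a short finite case analysis rather than a genuine obstacle; everything else is a routine layering of a compactness-and-finiteness argument on top of Proposition \ref{prop subword}.
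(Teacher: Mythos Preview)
Your proof is correct and follows essentially the same route as the paper: both arguments show that every nonempty word lies in $W_{K\to X}^{\lambda_0}$ via the ping-pong hypothesis at $\lambda_0$, then use compactness of $K_i$, openness of $X_j$, and finiteness of $\{\overline v:\len(\overline v)<N\}$ to propagate this to a neighborhood $U$, and finally invoke Proposition~\ref{prop subword}. The only cosmetic difference is that the paper phrases the compactness step via the continuous distance function $\lambda\mapsto d(L_\lambda^{x,y}K_i,\,X\setminus X_j)$ rather than the tube lemma, but the content is identical.
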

\begin{proof}
    First we recall that $\len(\overline{w})$ denotes the number of letters of the word $\overline{w}$, and that $F_2^{\overline{x},\overline{y}}\subseteq F_2$ consists of the words that begin with the letter $\overline{x}$ and end with $\overline{y}$. 
    Let $N\in \N$, and define $L = \{w\in F_2: \len(w)< N\}$.
    For $x,y\in\{a,b\}$, we denote $L^{\overline{x},\overline{y}} = F_2^{\overline{x},\overline{y}}\cap L$, and for $\lambda\in \Lambda$, $L_\lambda^{x,y}$ is the subset of corresponding elements in $H_\lambda$ (i.e., the image of $L^{\overline{x},\overline{y}}$ under the homomorphism $\overline{a}\mapsto a_\lambda, \overline{b}\mapsto b_\lambda$). Denote by $d$ the metric on $X$.
    Since $L$ is finite, $K_1$ is compact and $a:\Lambda\to H$ is continuous, the function
    \begin{equation*}
        h:\Lambda\to \R,\qquad h(\lambda) = d(L^{a, a}_{\lambda}K_1, X\setminus X_2)
    \end{equation*}
    (where $L^{a, a}_{\lambda}K_1 \coloneqq \{gx:g\in L^{a, a}_{\lambda}, x\in K_1\}$) is continuous.
    By our assumption on $\lambda_0$, $L^{a, a}_{\lambda_0}K_1 \subseteq X_2$ and hence $h>0$ on some neighborhood $U\subseteq\Lambda$ of $\lambda_0$. In other words, $L^{a, a}_{\lambda}K_1\subseteq X_2$ for every $\lambda\in U$.
    Arguing analogously and taking smaller $U$ if necessary, we can see that for every $\lambda\in U$,
    \begin{equation}\label{eq containment k-x2}
        L^{a, a}_{\lambda}K_1\subseteq X_2,\qquad L^{a, b}_{\lambda}K_2\subseteq X_2,\qquad L^{b, a}_{\lambda}K_1 \subseteq X_1 \qquad \text{and} \qquad L^{b, b}_{\lambda}K_2\subseteq X_1.
    \end{equation}
    This means that for every $\lambda\in U$, $L \subseteq W_{K\to X}^\lambda$. In other words, all words in $\mathcal{F}(W_{X\to K}^\lambda)\setminus W_{K\to X}^\lambda$ are of length at least $N$. 
    We conclude by applying Proposition \ref{prop subword}.
\end{proof}
\begin{corollary}
    Under the assumptions of the Proposition \ref{prop subword length}, suppose that there exists $M\in \N$ such that $\overline{a}^k,\overline{b}^k\in W_{X\to K}^\lambda$ for every $\lvert k\rvert \ge M$ and $\lambda\in\Lambda$. Then $\sigma(H_\lambda)\to\infty$ as $\lambda\to\lambda_0$ (cf.\ Definition \ref{definitions words}).
\end{corollary}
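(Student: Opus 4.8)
The plan is to reduce the corollary to a direct application of Proposition \ref{prop subword length} together with a syllable-counting argument. Unwinding the definition, the statement ``$\sigma(H_\lambda)\to\infty$ as $\lambda\to\lambda_0$'' means precisely that for every $N\in\N$ there is a neighborhood $U$ of $\lambda_0$ such that every relator of $H_\lambda$ has at least $N$ syllables for all $\lambda\in U$. So I would fix $N\in\N$ and aim to produce such a $U$.

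First I would observe that we may assume $M\ge 2$, since enlarging $M$ only weakens the hypothesis, and that the additional hypothesis of the corollary contains exactly the additional hypothesis appearing in the second part of Proposition \ref{prop subword length} (with the same $M$), so that the restriction to relators whose first and last letters coincide may be dropped. Then I would invoke Proposition \ref{prop subword length} with the threshold $N' \coloneqq N(M-1)$ in place of $N$: this yields a neighborhood $U$ of $\lambda_0$ such that for every $\lambda\in U$, every relator $\overline w$ of $H_\lambda$ admits an exact subword $\overline v\in\mathcal{F}(W^\lambda_{X\to K})$ with $\len(\overline v)\ge N'$.

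The remaining step is to convert this length bound on $\overline v$ into a syllable bound. Since $\overline v\in\mathcal{F}(W^\lambda_{X\to K})$, no exact subword of $\overline v$ lies in $W^\lambda_{X\to K}$; in particular no syllable of $\overline v$ --- each being an exact subword of the form $\overline a^{\,n}$ or $\overline b^{\,n}$ --- can have $\lvert n\rvert\ge M$, by the hypothesis. Hence every syllable of $\overline v$ has length at most $M-1$, so $\len(\overline v)\le (M-1)\,\syl(\overline v)$ and therefore $\syl(\overline v)\ge N'/(M-1)=N$. Because an exact subword's syllables form a subset of the parent word's syllables, $\syl(\overline w)\ge\syl(\overline v)\ge N$ (and if $H_\lambda$ happens to be free the inequality $\sigma(H_\lambda)=\infty\ge N$ is trivial). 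Thus $\sigma(H_\lambda)\ge N$ for all $\lambda\in U$, and since $N\in\N$ was arbitrary this is exactly the asserted convergence.

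I do not expect a genuine obstacle here: the substantive work is already packaged into Proposition \ref{prop subword length}, and what remains is routine. The only points requiring care are bookkeeping ones --- checking that the hypothesis genuinely licenses removing the first/last-letter restriction (otherwise one would only control relators that begin and end with the same letter, which is insufficient), and choosing the length threshold $N'$ large enough in terms of $N$ and $M$ to absorb the worst case in which every syllable of $\overline v$ is an $(M-1)$-st power.
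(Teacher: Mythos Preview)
Your proposal is correct and follows essentially the same approach as the paper: apply Proposition~\ref{prop subword length} to obtain a long exact subword in $\mathcal{F}(W_{X\to K}^\lambda)$, then use the hypothesis to bound each syllable's length by $M-1$ and convert the length bound into a syllable bound. The only cosmetic difference is that the paper phrases the argument via sequences $\lambda_n\to\lambda_0$ and a soft finiteness count, whereas you work directly with neighborhoods and the explicit threshold $N'=N(M-1)$.
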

\begin{proof}
    Let $\lambda_n\to \lambda_0$ and for every $n$ let $\overline{w}_n$ be a relator of $H_{\lambda_n}$. Then by Proposition \ref{prop subword length}, there is a sequence $(\overline{u}_n)$ such that every $\overline{u}_n$ is an exact subword of $\overline{w}_n$ that belongs to $\mathcal{F}(W_{X\to K}^{\lambda_n})$ and $\len(\overline{u}_n)\to \infty$.
    By our assumption, every syllable of $\overline{u}_n$ is of the form $a^k$ or $b^k$ where $\lvert k\rvert < M$.
    Since for any $C\in\N$, there are only finitely many such words with less than $C$ syllables, it follows that
    $\syl(\overline{w}_n)\ge \syl(\overline{u}_n)\underset{n\to\infty}{\longrightarrow}\infty$. Since $(\lambda_n)$ and $(\overline{w}_n)$ were arbitrary, $\sigma(H_\lambda)\to\infty$.
\end{proof}

\begin{corollary}\label{cor long ab}
    Under the assumptions of Proposition \ref{prop subword length},
    suppose that for every $\lambda\in\Lambda$
    \begin{equation}\label{eq forbidden words 1}
        \{\overline{a}^n, \overline{b}^n: \lvert n\rvert\ge 2\}\subseteq W_{X\to K}^\lambda.
    \end{equation}
    Then for every $N\in \N$ there exists a neighborhood $U\subseteq \Lambda$ of $\lambda_0$ such that for every $\lambda\in U$, every relator of $H_\lambda$ has an exact subword of the form $\overline{a}^{n_1}\overline{b}^{n_2}\cdots \overline{a}^{n_{k-1}} \overline{b}^{n_k}$ where $k\ge N$ and $n_j\in\{\pm1\}$.

    If in addition,
    \begin{equation}\label{eq forbidden words 2}
        \{(\overline{a}\overline{b})^{\pm1}, (\overline{b}\overline{a})^{\pm1}\}\subseteq W_{X\to K}^\lambda,
    \end{equation}
    then the form of the mentioned subword is $(\overline{a}\overline{b}^{-1})^{k'}$ or $(\overline{a}^{-1}\overline{b})^{k'}$ where $2k'\ge N$.
\end{corollary}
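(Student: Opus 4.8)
The plan is to deduce this directly from Proposition \ref{prop subword length} together with the purely combinatorial analysis already performed in the proof of Corollary \ref{corollary good words}; no new analytic input is needed. The only role of \eqref{eq forbidden words 1} beyond shaping the syllables is that, taking $M=2$, it provides exactly the hypothesis ``$\overline{a}^k\in W_{X\to K}^\lambda$ for all $\lvert k\rvert\ge M$ and all $\lambda$'' required by the second half of Proposition \ref{prop subword length}. So, given $N\in\N$, I would set $N'=N+2$ and invoke Proposition \ref{prop subword length} with $N'$ in place of $N$: there is a neighborhood $U$ of $\lambda_0$ such that for every $\lambda\in U$, every relator $\overline{w}$ of $H_\lambda$ (with no restriction on its first and last letters) has an exact subword $\overline{u}\in\mathcal{F}(W_{X\to K}^\lambda)$ with $\len(\overline{u})\ge N+2$.

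Next I would read off the syllable structure of such a $\overline{u}$ exactly as in the first paragraph of the proof of Corollary \ref{corollary good words}: the inclusion \eqref{eq forbidden words 1} forces every syllable of a word in $\mathcal{F}(W_{X\to K}^\lambda)$ to be one of $\overline{a}^{\pm1},\overline{b}^{\pm1}$, and since the word is reduced its consecutive syllables alternate between powers of $\overline{a}$ and powers of $\overline{b}$. Hence $\overline{u}$ is already of the shape $\overline{c}_1^{\,n_1}\overline{c}_2^{\,n_2}\cdots$ with the $\overline{c}_j$ alternating, $n_j\in\{\pm1\}$, and $\syl(\overline{u})=\len(\overline{u})$. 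Deleting the leading syllable when it is a power of $\overline{b}$ and the trailing syllable when it is a power of $\overline{a}$ leaves an exact subword of $\overline{u}$ — hence of the relator — of the required form $\overline{a}^{n_1}\overline{b}^{n_2}\cdots\overline{a}^{n_{k-1}}\overline{b}^{n_k}$ with $k\ge\len(\overline{u})-2\ge N$. This proves the first assertion.

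For the sharper conclusion under the extra hypothesis \eqref{eq forbidden words 2}, I would again borrow the second paragraph of the proof of Corollary \ref{corollary good words}: \eqref{eq forbidden words 2} refines the alternation so that $\mathcal{F}(W_{X\to K}^\lambda)\subset\mathcal{B}\cup\mathcal{B}^{-1}$ with $\mathcal{B}$ as in \eqref{set B}, and in particular $\overline{u}\in\mathcal{B}\cup\mathcal{B}^{-1}$. It then remains a short finite case-check that each of the eight word shapes in $\mathcal{B}\cup\mathcal{B}^{-1}$ contains an exact subword equal to $(\overline{a}\overline{b}^{-1})^{k'}$ or $(\overline{a}^{-1}\overline{b})^{k'}$ with $2k'\ge\len-2$ — for example $(\overline{b}^{-1}\overline{a})^k=\overline{b}^{-1}(\overline{a}\overline{b}^{-1})^{k-1}\overline{a}$ contains $(\overline{a}\overline{b}^{-1})^{k-1}$, and $(\overline{b}\overline{a}^{-1})^k=\overline{b}(\overline{a}^{-1}\overline{b})^{k-1}\overline{a}^{-1}$ contains $(\overline{a}^{-1}\overline{b})^{k-1}$, losing at most two letters in every case. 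Applying this to $\overline{u}$ and using $\len(\overline{u})\ge N+2$ produces the desired subword $(\overline{a}\overline{b}^{-1})^{k'}$ or $(\overline{a}^{-1}\overline{b})^{k'}$ with $2k'\ge N$.

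There is no substantive obstacle here beyond keeping track of the small additive constants relating $\len(\overline{u})$, $\syl(\overline{u})$, and $k'$ — which is precisely why one should feed $N+2$ (rather than $N$) into Proposition \ref{prop subword length}; all the genuine content has already been isolated in that proposition and in Corollary \ref{corollary good words}.
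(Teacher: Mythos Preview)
Your proof is correct and follows essentially the same route as the paper's: invoke Proposition~\ref{prop subword length} (using the $M=2$ coming from \eqref{eq forbidden words 1} to drop the first/last-letter restriction), quote the inclusions $\mathcal{F}(W_{X\to K}^\lambda)\subset\mathcal{A}$ and, under \eqref{eq forbidden words 2}, $\mathcal{F}(W_{X\to K}^\lambda)\subset\mathcal{B}\cup\mathcal{B}^{-1}$ from Corollary~\ref{corollary good words}, then trim at most two letters. Your only difference is the explicit bookkeeping step of feeding $N+2$ into Proposition~\ref{prop subword length}, which the paper leaves implicit.
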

\begin{proof}
    Let $U\subseteq \Lambda$ be the neighborhood of $\lambda_0$ given by Proposition \ref{prop subword length}, corresponding to $N+2$ rather than $N$. Let $\lambda\in U$ and let $w$ be a relator of $H_{\lambda}$.
    As was seen in the proof of Corollary \ref{corollary good words}, $\mathcal{F}(W_{X\to K}^\lambda) \subseteq \mathcal{A}$ (where $\mathcal{A}$ is as in Equation \eqref{set A}), and hence by Proposition \ref{prop subword length}, $w$ has an exact subword from $\mathcal{A}$ whose length is at least $N+2$. If this subword begins with the letter $b^{\pm 1}$, or ends with $a^{\pm 1}$, we can omit these letters to obtain a subword of the desired form.
    The second part follows similarly, using the fact that $\mathcal{F}(W_{X\to K}^\lambda) \subseteq \mathcal{B}\cup\mathcal{B}^{-1}$ (where $\mathcal{B}$ is defined in Equation \eqref{set B}).
\end{proof}

\section{Proofs of the main results}\label{section results}
In this section, we shift our focus back to the groups defined in the introduction, and prove all of the main results.
Recall the notation $A_\lambda, B_\lambda$ from Equation \eqref{matrix-notation}.
We consider the standard action of $\SL_2(\C)$ by M{\"o}bius transformations on the extended complex plane $\widehat{\C}\coloneqq\C\cup\{\infty\}$.
In particular, $A_\lambda^n z = z + n\lambda$ and $B_\lambda^n z = \frac{z}{n\lambda z+1}$ for every $z\in \widehat{\C}$.
We begin with a simple observation.
\begin{lemma}\label{symmetry lemma}
    Let $\lambda,\mu\in \C$ and let $w = s_1^{n_1}\cdots s_k^{n_k}$ be a reduced word with $s_i\in \{A_\lambda, B_\mu\}$. Then using the convention $-\infty=\infty$, for every $z\in\widehat{\C}$,
    \[s_1^{-n_1}\cdots s_k^{-n_k}(z) = -w(-z).\]
\end{lemma}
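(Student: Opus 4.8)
The plan is to conjugate by the matrix $J = \begin{pmatrix} 1 & 0 \\ 0 & -1 \end{pmatrix}$ (or equivalently $\begin{pmatrix} i & 0 \\ 0 & -i \end{pmatrix} \in \SL_2(\C)$, which represents the same M\"obius transformation $z \mapsto -z$), and observe how this conjugation acts on the two generators. First I would record the elementary computation
\[
J A_\lambda J^{-1} = \begin{pmatrix} 1 & -\lambda \\ 0 & 1\end{pmatrix} = A_{-\lambda} = A_\lambda^{-1}, \qquad J B_\mu J^{-1} = \begin{pmatrix} 1 & 0 \\ -\mu & 1\end{pmatrix} = B_{-\mu} = B_\mu^{-1},
\]
so conjugation by $J$ sends each generator to its inverse. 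Hence for any reduced word $w = s_1^{n_1}\cdots s_k^{n_k}$ with $s_i \in \{A_\lambda, B_\mu\}$ we get $J\, w\, J^{-1} = (J s_1 J^{-1})^{n_1}\cdots (J s_k J^{-1})^{n_k} = s_1^{-n_1}\cdots s_k^{-n_k}$, since conjugation is a homomorphism and commutes with taking powers.

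Next I would translate this matrix identity into the statement about the M\"obius action. The M\"obius transformation induced by $J$ is $z \mapsto -z$ on $\widehat{\C}$, with the convention $-\infty = \infty$, and it is an involution. Therefore, writing $\phi(z) = -z$, we have for every $z \in \widehat{\C}$
\[
s_1^{-n_1}\cdots s_k^{-n_k}(z) = (J w J^{-1})(z) = \phi\big(w(\phi(z))\big) = -\,w(-z),
\]
which is exactly the claimed formula. The case $z = \infty$ is handled automatically by the stated convention, since $\phi$ fixes $\infty$.

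There is essentially no obstacle here: the only things to be careful about are that $\begin{pmatrix}1&0\\0&-1\end{pmatrix}$ has determinant $-1$ and so does not itself lie in $\SL_2(\C)$, which is why one either works in $\GL_2(\C)$ (M\"obius transformations only depend on the matrix up to scalar, so this is harmless) or replaces it by $\pm\begin{pmatrix} i&0\\0&-i\end{pmatrix}$; and that the identity $A_{-\lambda} = A_\lambda^{-1}$, $B_{-\mu} = B_\mu^{-1}$ is what makes the right-hand side come out as $s_1^{-n_1}\cdots s_k^{-n_k}$ rather than as a word in different generators. Neither point requires more than a line to address, so the proof is short.
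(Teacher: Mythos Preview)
Your argument is correct. The paper's own proof is phrased slightly differently: it simply notes that the identity is immediate for a single syllable $A_\lambda^n$ or $B_\mu^n$ and then inducts on $\syl(w)$. Your version is a repackaging of the same content via the conjugation $J w J^{-1}$ with $J$ inducing $z\mapsto -z$: checking $J A_\lambda J^{-1}=A_\lambda^{-1}$ and $J B_\mu J^{-1}=B_\mu^{-1}$ is exactly the single-syllable base case, and ``conjugation is a homomorphism'' replaces the induction step. The conjugation viewpoint is a bit more conceptual, since it names the involution responsible for the symmetry once and for all, while the paper's induction is marginally shorter on the page; mathematically there is no real difference.
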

\begin{proof}
    The claim is readily checked for $w= A_\lambda^n$ or $w = B_\mu^n$, and hence follows by induction on $\syl(w)$.
\end{proof}

\begin{proof}[Proof of Theorem \ref{complex theorem} for $\mu = 2$]
Let $\Lambda = \{\lambda\in \C: \re \lambda > 1.5\}$, and define $a,b:\Lambda\to \SL_2(\C)$ by
\begin{equation}\label{eq a b functions}
    a_\lambda \coloneqq a(\lambda) = \begin{pmatrix}
    1 &  \lambda \\
    0 & 1
    \end{pmatrix} = A_\lambda,\qquad  b= b(\lambda) = \begin{pmatrix}
    1 & 0 \\
    2 & 1
    \end{pmatrix} = B_2
\end{equation}
(the function $b$ is constant). Define
\begin{equation}\label{eq X1 X2}
    X_1 = \{z\in \C:\lvert z\rvert < 1\}\subseteq \widehat{\C},\qquad X_2 = \{z\in \C:\lvert z\rvert > 1\}\cup\{\infty\} \subseteq \widehat{\C}.
\end{equation}
We note that for every $n\ge 1$ and $\lambda\in \Lambda$,
\begin{equation}\label{eq a^n}
    \re (a_\lambda^n z) = \re z+n\re\lambda
\end{equation}
and also
\begin{align}
\begin{minipage}{.7\textwidth}
\centering
$b^n(X_2)$ is the open ball centered on the real axis and whose boundary contains $\frac{1}{2n+1}$ and $\frac{1}{2n-1}$.\end{minipage} \label{eq b^n}
\end{align}
It follows from these observations that for every $n\ge 1$, $a_2^n (X_1)\subseteq X_2$ and $b^n (X_2)\subseteq X_1$.
By Lemma \ref{symmetry lemma} and the symmetry (about $0$) of $X_1$ and $X_2$, the same holds for $n<0$ as well, verifying the assumption of Proposition \ref{prop subword length} for $\lambda_0 = 2$.

Next, we define
\begin{equation}\label{eq K1 K2}
    K_1 = \{\lvert z\rvert \le 1/2\},\qquad K_2 = \{\lvert z\rvert \ge 5/4\}\cup\{\infty\},
\end{equation}
and turn to verify Equations \eqref{eq forbidden words 1} and \eqref{eq forbidden words 2}.
Let $\lambda\in \Lambda$. Once again by Lemma \ref{symmetry lemma}, it suffices to verify those inclusions for the words with positive powers. Let $n\ge 2$. By Equation \eqref{eq a^n}, $a_\lambda^n (X_1)\subseteq K_2$ and by Equation \eqref{eq b^n}, the leftmost point of $b^n (X_2)$ is positive and its rightmost point is bounded above by $1/3$, so $b^n (X_2) \subseteq K_1$.
Equation \eqref{eq b^n} also implies that for $z\in X_2$, $\re bz > 1/3$, and thus $a_\lambda b(X_2) \subseteq K_2$.
Finally,
\[ba_\lambda(X_1)\subseteq b(\{\re z > 0\}) = \{\lvert z - 1/4 \rvert <1/4\}\subseteq K_1.\]
Theorem \ref{complex theorem} with $\mu = 2$ is now an immediate consequence of Corollary \ref{cor long ab} with $\lambda_0 = 2$.

Let us explain why given $\lambda_n\to 2$, we can compute the constants $k_n$ mentioned in the theorem effectively.
To find a lower bound on the length of the exact subword $w_n$ in the theorem, one needs to follow the proofs, which involve determining whether $w(K_1)$ or $w(K_2)$ intersect $X_1$ and $X_2$, for increasingly longer words $w\in \mathcal{B}$ (see Equation \eqref{set B}), until $w\notin W_{K\to X}$. Since the boundaries of these subsets in $\widehat{\C}$ are either lines or circles, this requires computing the image of a finite collection of points under $w$. We also note that our procedure is guaranteed to terminate only when $\lambda_n$ corresponds to a non-free group (since the existence of $w\in \mathcal{B}\setminus W_{K\to X}$ arises from the presence of a relator). Therefore, to ensure that $k_n$ is computed after finitely many steps, the procedure should be stopped after a prescribed number of iterations, which depends on $\lambda_n$ and increases as $n\to \infty$ (so that $k_n\to\infty$).
\end{proof}
We postpone the proof of the second part of the theorem (with $\mu=i$), and turn to prove Theorem \ref{real theorem}.
Fix some $\lambda\in\R$ and define $a_\lambda$ and $b$ as in Equation \eqref{eq a b functions}.
Then $a_\lambda, b \in \SL_2(\R)$, which acts (by M{\"o}bius transformations) on $\R\cup\{\infty\}$ (the real projective line).
We take the intersection of $\R\cup\{\infty\}$ with $X_1,X_2,K_1$ and $K_2$ defined in Equations \eqref{eq X1 X2} and \eqref{eq K1 K2} and keep the same notation for these subsets, namely
\begin{align*}
    &X_1 = \{x\in \R:-1< x < 1\}, &X_2& = \{x\in \R:\lvert x\rvert > 1\}\cup\{\infty\},\\
    &K_1 = \{x\in \R:-\frac{1}{2}\le x\le \frac{1}{2}\}, &K_2& = \{x\in \R:\lvert x\rvert \ge \frac{5}{4}\}\cup\{\infty\}.
\end{align*}
We wish to understand the set $W_{K\to X}^\lambda$, in order to use Corollary \ref{corollary good words}.
\begin{lemma} \label{lemma K2 to X2}
    Let $1.8\le \lambda < 2$ and choose $0<\theta < \pi/4$ such that $\lambda = 2\cos^2\theta$. Then
    \begin{equation}\label{eq ab^-1 K2 to X2}
        (a_\lambda b^{-1})^k (K_2) \subseteq X_2 \text{ for every integer }0\le k\le \frac{1}{4\theta}.
    \end{equation}
\end{lemma}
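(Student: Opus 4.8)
The plan is to compute the matrix $a_\lambda b^{-1}$ explicitly and recognize it, up to conjugation, as an elliptic element of $\SL_2(\R)$ whose rotation angle is controlled by $\theta$. We have
\[
a_\lambda b^{-1} = \begin{pmatrix} 1 & \lambda \\ 0 & 1\end{pmatrix}\begin{pmatrix} 1 & 0 \\ -2 & 1\end{pmatrix} = \begin{pmatrix} 1-2\lambda & \lambda \\ -2 & 1\end{pmatrix},
\]
which has trace $2-2\lambda = 2-4\cos^2\theta = -2\cos 2\theta$. Since $0<\theta<\pi/4$ we get $|{\tr}| = 2|\cos 2\theta| < 2$, so $M\coloneqq a_\lambda b^{-1}$ is elliptic; as a M\"obius transformation it is conjugate in $\PSL_2(\R)$ to a rotation of $\HH^2$ of angle $\pm(\pi - 2\theta)$ (the sign/shift coming from the negative trace; concretely one writes $-M$, which has trace $2\cos 2\theta$, and $-M$ acts the same way on $\R\cup\{\infty\}$). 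The key point I want to extract is that the induced action on the boundary circle $\R\cup\{\infty\}$ is conjugate to a rigid rotation of the circle through an angle that is a controlled multiple of $\theta$: roughly, $M$ advances points along $\partial\HH^2$ by an angle comparable to $2\theta$ (more precisely $\pi-2\theta$ modulo the parametrization), so that $M^k$ keeps a given arc inside a slightly larger arc as long as $k$ is a small enough multiple of $1/\theta$.

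Concretely, I would find the fixed point $z_0\in\HH^2$ of $M$ and an explicit element $g\in\SL_2(\R)$ with $g(z_0)=i$, so that $gMg^{-1}$ is the standard rotation $R_\alpha = \begin{pmatrix}\cos(\alpha/2) & \sin(\alpha/2)\\ -\sin(\alpha/2) & \cos(\alpha/2)\end{pmatrix}$ with $2\cos(\alpha/2) = \pm 2\cos\theta$, i.e.\ $\alpha/2 = \theta$ (choosing the representative with the right sign), so $M^k$ is conjugate to $R_{2k\theta}$. Then I would track the images $g(K_2)$ and $g(X_2)$: these are arcs (or complements of arcs) on the boundary circle, and I would show that $g(K_2)$ is contained in an arc $I$ such that all rotations $R_{2k\theta}I$ for $0\le k\le \frac{1}{4\theta}$ remain inside $g(X_2)$. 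Since $2k\theta\le \tfrac12$ in that range, the total angular displacement is at most a fixed small amount, and the containment $K_2\subset X_2$ has enough ``room'' (recall $K_2$ starts at $5/4$ while $X_2$ starts at $1$, and $\infty$ is interior to both) to absorb a rotation of this size; this is where the specific numbers $5/4$ and $1$ and the bound $\frac{1}{4\theta}$ get matched up. Using the symmetry of $K_2$ and $X_2$ about $0$ (and Lemma \ref{symmetry lemma}) one only has to check one ``side'' of the picture.

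The main obstacle is the bookkeeping of the conjugation: translating the Euclidean circles $\{|z|=1\}$ and $\{|z|=5/4\}$ (intersected with $\R\cup\{\infty\}$) into arcs in the coordinates where $M$ is a genuine rotation, and then verifying that a rotation by $\le \tfrac12$ radian really does keep the smaller arc inside the larger one. I expect this to come down to an explicit but elementary estimate — computing the endpoints $\pm1$, $\pm5/4$, $\infty$ under $g$ and bounding the relevant arc-length — rather than anything conceptually hard; the elliptic normal form does all the real work, and the hypothesis $\lambda\ge 1.8$ (equivalently $\theta$ small) is exactly what guarantees the arc of $K_2$ is short enough and well-centered for the estimate to close. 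One should also double-check the edge behavior at $k=0$ (trivially $K_2\subset X_2$) and at the largest admissible $k$, where the bound is tight.
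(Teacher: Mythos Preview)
Your approach is essentially the paper's: recognize $c_\lambda\coloneqq a_\lambda b^{-1}$ as elliptic, determine its rotation angle in terms of $\theta$, and argue that for $k\le \frac{1}{4\theta}$ the accumulated rotation is too small to push $K_2$ across the gap into $X_2^c=[-1,1]$. Two points, however.

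First, a slip in the rotation angle. From $\tr c_\lambda = 2-2\lambda = -2\cos 2\theta$ you should get $\cos(\alpha/2)=\pm\cos 2\theta$, not $\pm\cos\theta$; hence $\alpha=4\theta$ (equivalently $c_\lambda$ is a hyperbolic rotation of angle $-4\theta$ about its fixed point $z_\theta=\frac{1+e^{2i\theta}}{2}$), and the total displacement over $k\le\frac{1}{4\theta}$ iterates is $4k\theta\le 1$, not $2k\theta\le\frac12$. This does not break the scheme but the numbers have to match for the final inequality to close.

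Second, the part you flag as ``bookkeeping'' is where the actual content lies, and the paper handles it more directly than by conjugating to a standard rotation and chasing arcs. It stays at the fixed point $z_\theta$ and measures hyperbolic angles from there: (i) since $c_\lambda$ rotates clockwise and the angle at $z_\theta$ from $1$ to $-1$ (going clockwise through $\infty$) exceeds $\pi>4\theta$, the first point of $K_2$ that could enter $[-1,1]$ under iteration is $5/4$, so it suffices to check $c_\lambda^k(5/4)\in X_2$; and (ii) the angle $\alpha$ at $z_\theta$ between the geodesics to $5/4$ and to $1$ satisfies $\alpha>1$, which is verified by an explicit one-variable computation using the rotation formula and the hypothesis $\lambda\ge 1.8$. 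Then $4k\theta\le 1<\alpha$ gives the claim. Your plan would work, but reducing to the single boundary point $5/4$ and comparing one explicit angle to $1$ is cleaner than transporting all four endpoints $\pm1,\pm\frac54$ through a conjugacy.
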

\begin{proof}
    It will be beneficial to use geometric considerations arising from the action of $\SL_2(\R)$ on the hyperbolic half-plane $\HH^2 = \{z\in \C:\im z > 0\}$. Denote $c_\lambda = a_\lambda b^{-1}$. As $\lvert \tr c_\lambda\rvert < 2$, $c_\lambda$ is elliptic and hence acts as a rotation about some point in $\HH^2$.
    We recall that a hyperbolic rotation of angle $2\alpha$ about $z\in \HH^2$ is represented by
    \begin{equation}\label{eq rotation formula}
        \pm\begin{pmatrix}
        \overline{z} &  z \\
         1 & 1 \\
        \end{pmatrix}
      \begin{pmatrix}
        e^{i\alpha} &   \\
          & e^{-i\alpha} \\
      \end{pmatrix}
        \begin{pmatrix}
        \overline{z} &  z \\
         1 & 1 \\
      \end{pmatrix}^{-1},
    \end{equation}
    and a simple calculation reveals that $c_\lambda$ is a rotation of angle $-4\theta$ about $z_\theta\coloneqq\frac{1 + e^{2i\theta}}{2}$.
    We claim that \eqref{eq ab^-1 K2 to X2} holds if and only if
    \begin{equation}\label{eq 5/4}
        c_\lambda ^{k} (5/4) \in X_2 \text{ for every integer }0\le k\le \frac{1}{4\theta}. 
    \end{equation}
    Since $5/4\in K_2$, \eqref{eq ab^-1 K2 to X2} immediately implies \eqref{eq 5/4}. The other direction
    should be intuitively clear as well: since $c_\lambda$ is a clockwise rotation, the first point of $K_2$ to hit $X_2^c = \{-1\le x\le 1\}$ when iterating $c_\lambda$ should be the leftmost positive point in $K_2$, which is $5/4$; we just need to verify that $5/4$ does not “jump over” $X_2^c$ when rotated.
    For $x,y\in \overline{\HH^2}$, we denote by $[x,y]$ the hyperbolic geodesic segment between $x$ and $y$.
    Let $\beta$ be the angle between $[z_\theta, 1]$ and $[z_\theta,-1]$ in $\HH^2$, measured clockwise; see Figure \ref{fig-beta}.
    \begin{figure}
    \centering
    \includegraphics[width=1\textwidth]{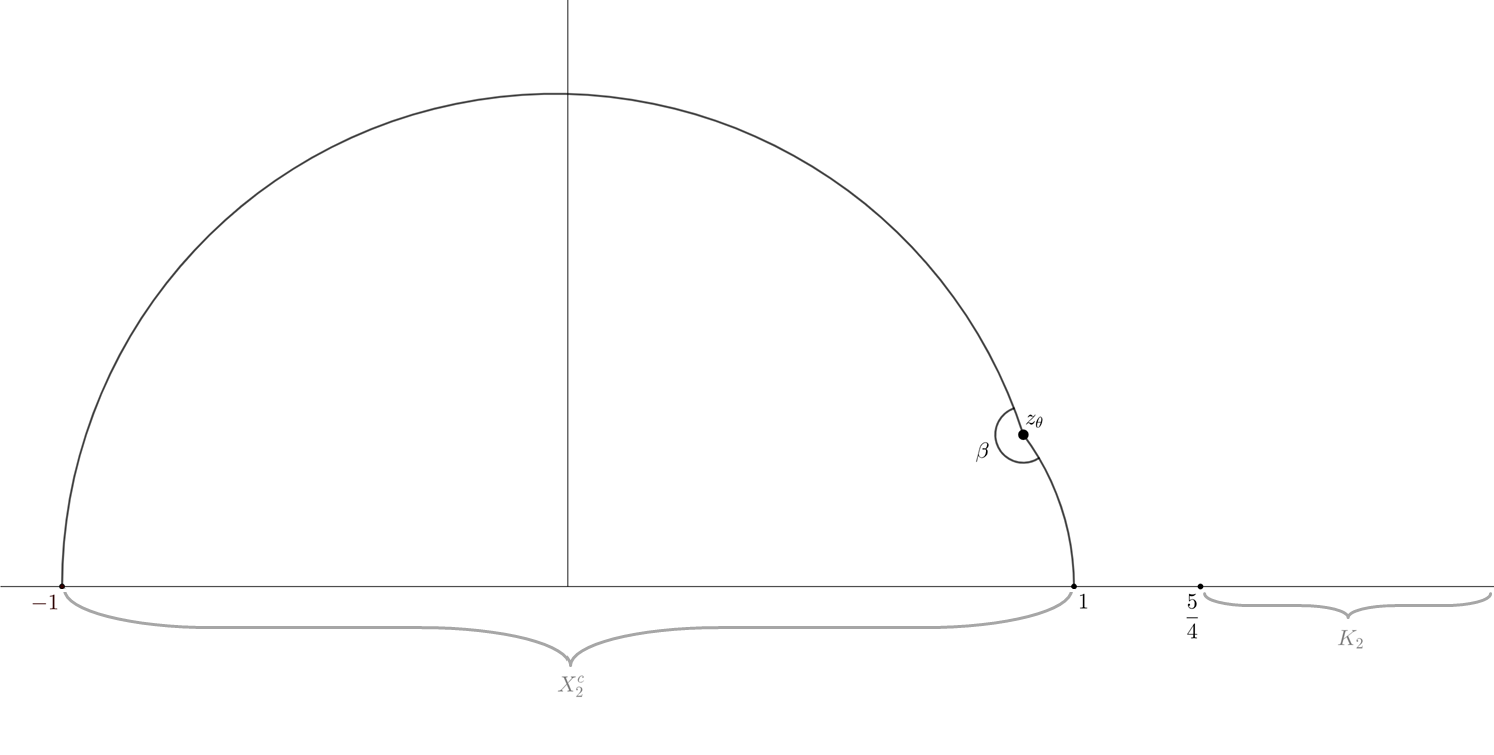}
    \caption{\label{fig-beta}The angle between the hyperbolic geodesics $[z_\theta, 1]$ and $[z_\theta,-1]$, denoted $\beta$, is greater than $\pi$.}
    \end{figure}
    Observe that $z_\theta$ lies on the geodesic $[0,1]$. Consequently, $\pi <\beta < 2\pi$, and in particular, $4\theta < \beta$ which shows the equivalence of \eqref{eq ab^-1 K2 to X2} and \eqref{eq 5/4}.
    
    Let $\alpha$ be the angle between the geodesics $[z_\theta,5/4]$ and $[z_\theta,1]$, measured clockwise (Figure \ref{fig-alpha}).
    \begin{figure}
    \centering
    \includegraphics[width=1\textwidth]{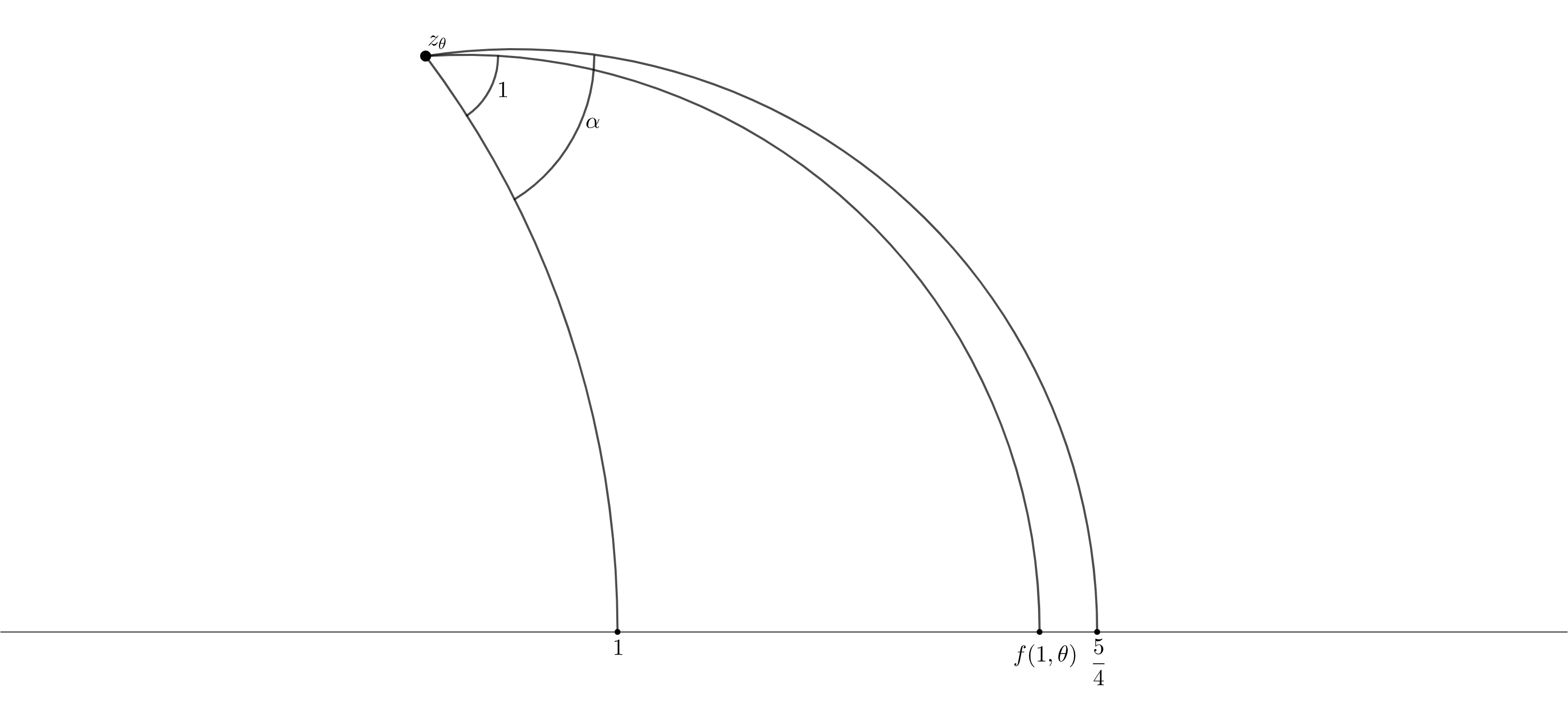}
    \caption{\label{fig-alpha}The angle between $[z_\theta,\frac{5}{4}]$ and $[z_\theta,1]$, denoted $\alpha$, is greater than $1$.}
    \end{figure}
    Then $\alpha > 1$; one way to see it is as follows: for $x\in\R$, let $f(x,\theta)$ denote the (hyperbolic) rotation of $x$ by an angle of $1$ about the point $z_\theta$. Using Equation \eqref{eq rotation formula} for $f$, we see that
    \[ f(1,\theta) = \frac{\cos(0.5)\cos(\theta)}{\cos(0.5+\theta)}.\]
    The maximum of $f(1,\theta')$ over all $0<\theta'<\pi/4$ such that $\lambda' = 2\cos^2 \theta' \ge 1.8$ is attained on $\theta_0$ which corresponds to $\lambda_0 = 1.8$, and $f(1,\theta_0) \approx 1.22$. Thus, $f(1,\theta)< 5/4$, implying that $\alpha > 1$. Therefore, for every integer $0\le k\le \frac{1}{4\theta}$,
    \[4\theta\cdot k \le 1 < \alpha,\]
    which means that $c_\lambda ^{k}(5/4) \in X_2$ as required.
\end{proof}
\begin{proof}[Proof of Theorem \ref{real theorem}]
    Let $1.8 \le \lambda < 2$, and choose $0 <\theta <\pi/4$ such that $\lambda = 2\cos^2(\theta)$. Let $0\le k \le \frac{1}{4\theta}$ be an integer, and denote $u_\lambda = (a_\lambda b^{-1})^k$. By Lemma \ref{lemma K2 to X2}, $u_\lambda (K_2)\subseteq X_2$.
    Observe that $a_\lambda (K_1)\subseteq K_2$, and recall from the proof of Theorem \ref{real theorem} that $b^{-1}(X_2) \subseteq X_1$. Hence, we obtain
    \begin{align*}
        b^{-1}u_\lambda (K_2)&\subseteq b^{-1}(X_2) \subseteq X_1, \\
        u_\lambda a_\lambda (K_1)&\subseteq  u_\lambda (K_2) \subseteq X_2,\\
        (b^{-1}a_\lambda)^{k+1} (K_1)&= b^{-1}u_\lambda a_\lambda (K_1) \subseteq X_1.
    \end{align*}
    In other words,
    \begin{equation*}
        \{ w\in \mathcal{B}: \syl(w)\le \frac{1}{2\theta}\}\subseteq W_{K\to X}^\lambda,
    \end{equation*}
    where $\mathcal{B}$ is the subset from Equation \eqref{set B}.
    By Lemma \ref{symmetry lemma}, every $w\in\mathcal{B}^{-1}$ with $\syl(w)\le\frac{1}{2\theta}$ belongs to $W_{K\to X}^\lambda$ as well.
    It follows from Corollary \ref{corollary good words} that every relator of $H_\lambda = G_{\lambda, 2}$ has an exact subword from $\mathcal{B}\cup \mathcal{B}^{-1}$ with more than $\frac{1}{2\theta}$ syllables.
    By possibly omitting the first and last letters, we obtain an exact subword of the desired form and length, completing the proof.
\end{proof}
\begin{remark}\label{remark bound is sharp}
    The lower bound in Theorem \ref{real theorem} cannot be improved beyond a constant factor, without further restrictions on $\lambda$:
     for every integer $n>2$, choose $\theta_n= \pi/2n$. As before, let $\lambda_n = 2\cos^2(\theta_n)$ and $c_{\lambda_n} = a_{\lambda_n}b^{-1}$. Then $c_{\lambda_n}$ is a (hyperbolic) rotation of angle $-4\theta_n=-2\pi/n$, which implies that $c_{\lambda_n}^{n}$ acts trivially on $\HH^2$ and that $c_{\lambda_n}^{2n}$ is a relator with $4n$ syllables, and hence
     \[ \sigma(G_{\lambda_n, 2})\le 4n = \frac{2\pi}{\theta_n}=\frac{2\pi}{\arccos(\sqrt{\lambda_n/2})}.\]
 \end{remark}

\begin{proof}[Proof of Theorem \ref{complex theorem} for $\mu = i$]
    Let $\Lambda = \{\lambda\in \C: \re \lambda > 1.5\}$ as before, and define $a,b:\Lambda\to \SL_2(\C)$ by
\begin{equation}
    a_\lambda = \begin{pmatrix}
    1 &  \lambda \\
    0 & 1
    \end{pmatrix},\qquad  b=b_\lambda = \begin{pmatrix}
    1 & 0 \\
    i & 1
    \end{pmatrix},\qquad \lambda\in\Lambda.
\end{equation}
For $z\in\C$ and $r>0$ denote $\mathfrak{B}_r(z) = \{w\in \C:\lvert z-w\rvert<r\}$, and define
    \[X_1 = \mathfrak{B}_1(i)\cup \mathfrak{B}_1(0)\cup \mathfrak{B}_1(-i), \qquad X_2 = \{z\in \C: \lvert\re z\rvert > 1\}\cup \{\lvert z\rvert>2\}\cup \{\infty\}.\]
    It is easy to see that $a_2^n (X_1)\subseteq X_2$ for every $n\ne 0$, and that we can choose some compact $K_2\subseteq X_2$ such that $a_\lambda^n (X_1) \subseteq K_2$ for every $\lvert n\rvert \ge 2$ and $\lambda\in \Lambda$.
    Following \cite[Theorem 2]{lyndon}, one simple geometric way to see the inclusions for $b$ is to set $J = \begin{pmatrix}
    0 &  -1 \\
    1 & 0
    \end{pmatrix}$, $b'z= (Jb^{-1}J^{-1})z = z + i$ and
    \[X_1' = JX_1 = \{\lvert \im \rvert > 1/2\}\cup \{\lvert z\rvert>1\} \cup\{\infty\},\qquad X_2' = JX_2 = \mathfrak{B}_\frac{1}{2}(\frac{1}{2})\cup \mathfrak{B}_\frac{1}{2}(0)\cup \mathfrak{B}_\frac{1}{2}(-\frac{1}{2}).\]
    Then once again it is clear that $(b')^n (X_2')\subseteq X_1'$ for every $n\ne 0$, and that we can choose some compact $K_1'\subseteq X_1'$ such that $(b')^n (X_2') \subseteq K_1'$ for every $\lvert n\rvert \ge 2$. The analogous inclusions for $b^n$ now follow with $K_1 = J^{-1}K_1'$, and we conclude by applying Corollary \ref{cor long ab}.
    The constants $(k_n)$ mentioned in the theorem can be computed effectively for reasons similar to those discussed in the proof of the first part of the theorem.
\end{proof}
\begin{proof}[Proof of Corollary \ref{corollary convergence}]
Since $G_{\lambda_n,\mu_n}=G_{\lambda_n,-\mu_n}$, by replacing $\mu_n$ with $-\mu_n$ if necessary we may assume that $\ell \in \{4,2i\}$.
For every (sufficiently large) $n$, choose $z_n\in\C$ such that $\mu_n z_n^{-2} = \ell/2$. Then
    \[J_nB_{\mu_n}J_n^{-1} = B_{\ell/2},\quad J_nA_{\lambda_n}J_n^{-1} = A_{\lambda_n'},\qquad \text{where } J_n= \begin{pmatrix}
        z_n & 0\\
        0 & z_n^{-1}
    \end{pmatrix} \text{ and } \lambda_n' = \lambda_n z_n^2.\]
    Then $\lambda_n' \to 2$ and  the assertion follows from Theorem \ref{complex theorem} and the fact that $J_n$ conjugates the relators of $G_{\lambda_n,\mu_n}$ to those of $G_{\lambda_n',\ell/2}$.
\end{proof}
\section{Discussion and questions}\label{section discussion}
We conclude the paper by presenting a few questions that came up during this work, along with potential future research directions.
\begin{enumerate}
\item Let us briefly turn to the normalization $G_{1,\mu},\; \mu\in \C$, which is more commonly used in the literature relevant to our current topic of discussion. In addition to the freeness of $G_{1, \mu}$ for $\mu\in\{\pm 4,\pm 2i\}$, its discreteness allows us to use our ping-pong argument. Therefore, the following natural question arises: let
\[\mathcal{R}' = \{0\ne\mu\in \C : G_{1,\mu} \text{ is free and discrete}\}.\]
For what values $\mu\in \partial\mathcal{R}'$ does Theorem \ref{complex theorem} hold?
It turns out that $\partial\mathcal{R}'$ is a Jordan curve, and furthermore, $\mathcal{R}'$ is precisely the closure of the well studied \emph{Riley slice} \cite{oshika} (for more information about the Riley slice, see for example \cite{keen-series, komori-series}).
It seems to be reasonable that our results could be extended to other values $\mu \in \partial\mathcal{R}'$, however, some new complications arise.
In the previous section, the specific values of $\mu$ were primarily used in the proofs to select $X_1$ and $X_2$, the corresponding subsets for the ping-pong argument.
However, such subsets may not exist for all $\mu\in \partial\mathcal{R}'$, as the ordinary set $\Omega(G_{1,\mu})$ (i.e., the largest open subset of $\widehat{\C}$ on which $G_{1,\mu}$ acts properly discontinuously) might be empty \cite{maskit}. Nevertheless, the ordinary set is nonempty for a dense collection of $\mu\in \partial\mathcal{R}'$ \cite{cusps}; groups $G_{1,\mu}$ associated with such $\mu$ are called \emph{cusp groups}.
Let us also note that all of the cusp groups with $\mu \in \partial\mathcal{R}'\setminus\{\pm 4, \pm 2i\}$ are \emph{non-classical}, meaning that the Jordan curves defining the boundary of the corresponding subsets $X_1$ and $X_2$ will neither be circles nor lines (see \cite[Theorem 6.18]{gilman-waterman} and also \cite[Figure 3]{gilman}).
\item We return to the normalization $G_{\lambda,2}$ which was used throughout most of the paper.
For every $n\in\N$ define $V_n = \{\lambda\in \C: \sigma(G_{\lambda, 2}) \ge n\}$ and $U_n = V_n\cap\R$.
Theorem \ref{complex theorem} asserts that for every $n$, $V_n$ contains complex open neighborhoods of $\pm 2$ and $\pm i$ and $U_n$ contains real open neighborhoods of $\pm 2$. It would be interesting to study the properties of these sets further; in particular, whether they are open or contain a dense subset of rationals. As previously discussed, many papers have studied groups that admit short relators (syllables-wise), and such groups correspond to values $\lambda\in\C\setminus V_n$ and $\lambda\in\R\setminus U_n$ for small $n$.
Moreover, these values of $\lambda$ admit infinitely many accumulation points \cite{lyndon, beardon, tantan, new-seq}, but this does not rule out the possibility that $\C\setminus V_n$ and $\R\setminus U_n$ are, for example, nowhere dense.
\item It would be interesting to know if our results extend to a larger collection of matrices which converge to $A_2$. For instance, can we replace $A_\lambda$ with some hyperbolic element such as
\[A_\lambda' = \begin{pmatrix}
    1+2\lambda &  2 \\
     \lambda & 1 \end{pmatrix}\]
(with $\lambda>0$)? Of course, this would only be interesting if not all of the groups $\langle A_\lambda', B_2\rangle $ are free.
\item Let $w$ be a relator of $G_{\lambda, \mu}$, where $\mu\in\{2,i\}$. Theorem \ref{complex theorem} tells us that $w$ has a long subword $w'$ of a particular form. It would be interesting to understand whether $w'$ constitutes a significant portion of $w$, specifically by examining the ratios $\syl(w')/\syl(w)$ or $\len(w')/\len(w)$ (recall that the length of a reduced word $w$, denoted $\len(w)$, is its number of letters).
\end{enumerate}

\bigskip

\textbf{Acknowledgements:} The author is grateful to Arie Levit for many helpful ideas and suggestions that contributed to the development of this paper, to Nir Lazarovich for his valuable insight, to Carl-Fredrik Nyberg-Brodda for pointing out relevant literature and to the anonymous referee for their constructive comments.

The author was supported by ISF grant 1788/22.

\bigskip

\DeclareEmphSequence{\itshape}
\bibliographystyle{plain}
\bibliography{references}

\end{document}